\colorlet{darkblue}{blue!50!black}
\newcommand{\e}{\varepsilon}
\newcommand{\R}{{\mathbb R}}
\newcommand{\Z}{{\mathbb Z}}
\newcommand{\IP}{{\mathbb P}}
\newcommand{\E}{{\mathbb E}}
\newcommand{\N}{{\mathbb N}}
\newcommand{\MMM}{\mathfrak{M}}
\newcommand{\XXXX}{{\mathfrak X}}
\newcommand{\bKK}{{\boldsymbol{\KK}}}
\newcommand{\xxi}{{\boldsymbol\xi}}
\newcommand{\eeta}{{\boldsymbol{{\eta}}}}
\newcommand{\mmu}{{\boldsymbol{{\mu}}}}
\newcommand{\nnu}{{\boldsymbol{{\nu}}}}
\newcommand{\zzeta}{{\boldsymbol{{\zeta}}}}
\newcommand{\XXX}{{\boldsymbol{X}}}
\newcommand{\PPP}{{\mathscr P}}
\newcommand{\BB}{{\cal B}}
\newcommand{\DD}{{\cal D}}
\newcommand{\EE}{{\cal E}}
\newcommand{\FF}{{\cal F}}
\newcommand{\KK}{{\cal K}}
\newcommand{\OO}{{\cal O}}
\newcommand{\PP}{{\cal P}}
\newcommand{\sS}{{\cal S}}
\newcommand{\dd}{{\textup d}}
\newcommand{\PPPP}{{\mathfrak P}}
\newcommand{\SSS}{{\mathscr S}}
\newcommand{\Id}{\mathop{\rm Id}\nolimits}
\newcommand{\supp}{\mathop{\rm supp}\nolimits}
\theoremstyle{plain}
\newtheorem*{mt}{Main Theorem}
\newtheorem{theorem}{Theorem}[section]
\newtheorem{lemma}[theorem]{Lemma}
\newtheorem{proposition}[theorem]{Proposition}
\newtheorem{corollary}[theorem]{Corollary}
\theoremstyle{definition}
\theoremstyle{remark}
\numberwithin{equation}{section}
\begin{document}
\title{Markovian reduction and exponential mixing in total variation for random dynamical systems}
\author{Sergei Kuksin\footnote{Universit\'e Paris Cit\'e and Sorbonne Universit\'e, CNRS, IMJ-PRG, 75013 Paris, France \& Peoples Friendship University of Russia (RUDN University) \& Steklov Mathematical Institute of Russian Academy of Sciences, Moscow, Russia \& National Research University Higher School of Economics, Moscow 119048, Russia; Email: \href{mailto:Sergei.Kuksin@imj-prg.fr}{Sergei.Kuksin@imj-prg.fr}} \and Armen Shirikyan\footnote{Department of Mathematics, CY Cergy Paris University, CNRS UMR 8088, 2 avenue Adolphe Chauvin, 95302 Cergy--Pontoise, France; Email: \href{mailto:Armen.Shirikyan@cyu.fr}{Armen.Shirikyan@cyu.fr}}}
\date{}
\maketitle

\vspace{-0.5cm}
\begin{abstract}
 The paper deals with the problem of large-time behaviour of trajectories for discrete-time dynamical systems driven by a random noise. Assuming that the phase space is finite-dimensional and compact, and the noise is a Markov process with a transition probability satisfying some regularity hypotheses, we prove that all the trajectories converge to a unique measure in the total variation metric. The proof is based on the Markovian reduction of the system in question and a result on mixing for Markov processes. Then we present an extension of this result to the case of systems driven by stationary noises. The results of this paper were announced in~\cite{KS-rms2024}.
 
\smallskip
\noindent
{\bf AMS subject classifications:} 34F05, 37A25, 37H30, 60J05

\smallskip
\noindent
{\bf Keywords:} Random dynamical systems, Markovian reduction, stationary measure, mixing
\end{abstract}

\tableofcontents

\setcounter{section}{-1}

\section{Introduction}
\label{s0} 
The study of large-time asymptotics of Markovian dynamical systems has a long history and goes back to the  thirties of the last century. Starting from the pioneering works of Kolmogorov~\cite{kolmogorov-1936,kolmogorov-1937} and Doeblin~\cite{doeblin-1938,doeblin-1940}, various methods were developed to prove the existence and uniqueness of a stationary measure and its stability in an appropriate metric; see the books~\cite{hasminskii2012,MT1993,borovkov1998,KS-book}. In the context of discrete-time dynamical systems driven by a random noise~$\{\eta_k\}$, the Markovian hypothesis implies that the random variables~$\eta_k$ should be independent. The case when the dynamics is not Markovian remained essentially open and was studied only for some specific models. Dobrushin~\cite{dobrushin-1968,dobrushin-1974} and Ruelle~\cite{ruelle-1968} proved  uniqueness of a Gibbs state under appropriate hypotheses on transition measures. Their results apply, in particular, to Markov and multiple Markov processes; see~\cite[Section~5.3]{doob1953} for a definition. In the context of a chain of anharmonic oscillators coupled to heat baths, Eckmann, Pillet, and  Rey-Bellet~\cite{EPR-1999} used a Markovian reduction based on the representation of the noise as a stochastic integral and proved convergence to a unique stationary measure (see also Jak\v si\'c--Pillet~\cite{JP-1997,JP-1998} for Hamiltonian systems coupled to one heat bath). A similar idea was used by Hairer~\cite{hairer-2005-fBM} for stochastic differential equations with a dissipative polynomial-type nonlinearity that are driven by a fractional Brownian motion. The recent paper~\cite{KS-2025} deals with the case of dynamical systems driven by stationary noises. It provides sufficient conditions for exponential mixing in the dual-Lipschitz metric for both discrete- and continuous-time cases and applies to a large class of random dynamical systems generated by  dissipative ODEs and PDEs. 

The aim of this paper is to derive simple sufficient conditions for mixing in the {\it total variation metric\/} for discrete-time dynamical systems in a finite-dimensional space that are driven by a Markovian noise or by a stationary noise. Namely, let us denote by~$X$ and~$\KK$ compact subsets of finite-dimensional Euclidean spaces~$H$ and~$E$, respectively, and by $S:H\times E\to H$ a continuous map such that $S(X\times \KK)\subset X$. We consider a random dynamical system of  the form
\begin{equation}\label{RDS}
	u_k=S(u_{k-1},\eta_k), \quad k\ge1,
\end{equation}
supplemented with the initial condition
\begin{equation}\label{IC}
	u_0=u,
\end{equation}
where $u\in X$ is a given point and~$\{\eta_k\}_{k\in\Z}$ is a random process in~$\KK$. We write~$\{u_k(u)\}_{k\in\Z_+}$ for the trajectory defined by~\eqref{RDS}, \eqref{IC}. In general, the random process $\{u_k(u)\}$ is not Markovian. This is the main source of difficulties when studying the large-time asymptotics of trajectories for~\eqref{RDS}. Our approach is based on the following two steps: 
\begin{itemize}\sl
\item Reduction of~\eqref{RDS} to a Markov process in an extended phase space.
\item Investigation of exponential stability of the dynamics generated by the reduced system in the space of probability measures on the extended phase space.
\end{itemize}
Both steps were carried out in detail in the paper~\cite{KS-2025} when the measures on~$X$ are endowed with the {\it dual-Lipschitz metric\/}. The main contribution of this work is the proof of exponential mixing in the stronger norm of {\it  total variation\/}. Let us describe the corresponding result and the main idea of its proof. 

Let us introduce the extended phase space $\XXXX:=X\times \bKK$, where the product space $\bKK:=\KK^{\Z_-}$ is endowed with the Tikhonov topology and distance~\eqref{dist}, and define the process $\eeta_k=(\eta_l,l\le k)$ in~$\bKK$. 
Together with trajectories given by~\eqref{RDS}, we consider the $\XXXX$-valued process $U_k:=(u_k,\eeta_k)$, $k\ge0$. Let us denote by $\{Q(\xxi;\cdot),\xxi\in\bKK\}$ the conditional law of~$\eta_1$, given the past $\eeta_0=\xxi$. Thus, $Q(\xxi;\cdot)$ is a transition probability from the support~$\EE\subset\bKK$ of the law of~$\eeta_0$ to~$\KK$. We shall always assume that one can choose a continuous version of the mapping $\EE\ni\xxi\mapsto Q(\xxi;\cdot)\in\PP(\KK)$, where the space~$\PP(\KK)$ of Borel probability measures on~$\KK$ is endowed with the weak topology. A rather straightforward calculation shows that the family of sequences $\{U_k\}$ corresponding to all possible initial conditions form a Markov process in~$\XXXX$ whose transition probability can be expressed in terms of~$Q(\xxi;\cdot)$ and the map~$S$; see Proposition~1.1 and~\cite[Lemma~1.2]{KS-2025}. Thus, the question of asymptotic behaviour of trajectories for the RDS~\eqref{RDS} reduces to that of the Markov process~$\{U_k\}$.  The price which we pay for this reduction is that the obtained Markov process  is rather degenerate: it is not strongly Markovian, its phase space is infinite dimensional, and the supports of its transition probability are low-dimensional subsets of the phase space. Still, Theorem~1.3 in~\cite{KS-2025} establishes the existence and exponential stability of a unique stationary measure~$\mmu\in\PP(\XXXX)$ for~$\{U_k\}$, provided that the  transition probability~$Q(\xxi;\cdot)$ possesses a Lipschitz-continuous density with respect to the Lebesgue measure on~$H$, and the derivative of the map $S(u,\eta)$ with respect to~$\eta$ is surjective for any $u\in X$ and $\eta\in\KK$. Let us denote by~$\mu$ the projection of~$\mmu$ to~$X$. The main result of this paper can be stated informally as follows (see Theorem~\ref{t-mixing-stationary} for an exact formulation). 

\begin{mt}
Let us assume that the above hypotheses are fulfilled. Then there are positive numbers~$C$ and~$\gamma$ such that, for any $u\in X$, the law of the trajectory of~\eqref{RDS}, \eqref{IC} satisfies the inequality
\begin{equation}
\|\DD(u_k)-\mu\|_{\mathrm{var}}\le Ce^{-\gamma k}, \quad k\ge0.
\end{equation}
\end{mt}

The proof of this result is based on the following observation. Let us denote by $\PPP(U;\cdot)$ the transition probability of the Markov process~$\{U_k\}$. Thus,  $\PPP(U;\cdot)$ is a probability measure on~$\XXXX$ for any $U\in\XXXX$. We write $\Pi:\XXXX\to X$ for the natural projection to the first component and denote by~$\Pi_*$ the corresponding map in the space of probability  measures. The conditions imposed on~$Q$ and~$S$ imply that the measure $\Pi_*\PPP(U;\cdot)$ can be written as $g(U,x)\ell(\dd x)$, where $\ell$ stands for the Lebesgue measure on~$H$, and the function~$g$ is Lipschitz-continuous in both arguments. It follows that if a sequence of measures $\lambda_n\in\PP(\XXXX)$ converges to a limit in the dual-Lipschitz metric, then the projections to~$X$ of their images under the map defined by the transition probability converge in the total variation metric; see Corollary~\ref{c-lip}. The conclusion of the Main theorem is a simple conse\-quence of this property and the exponential stability of~$\mmu$ mentioned above.  

\smallskip
The paper is organised as follows. In Section~\ref{s-DSMS}, we study dynamical systems driven by a {\it Markovian noise\/}, 
formulate for them a theorem on exponential mixing, and discuss its application to ordinary differential  equations with random perturbations. The proof of that theorem is presented in Section~\ref{s-proof-MT}. In Section~\ref{s-dssn}, we state our result in full generality and give its proof. The appendix deals with an auxiliary statement on the images of probability measures under regular maps. 

\subsection*{Acknowledgments} 
The research of AS was supported by the \textit{CY Initiative of Excellence\/} through the grant {\it Investissements d'Avenir\/} ANR-16-IDEX-0008 and by the ANR project DYNACQUS through the grant ANR-24-CE40-5714-01. 

\subsection*{Notation}
We denote by~$\Z$ and~$\R$ the sets of integers and real numbers, respectively,  and write~$\Z_+$ and~$\R_+$ for their subsets consisting of non-negative numbers. For a Polish space\footnote{i.e., a complete separable metric space.}~$X$ with a metric~$\dd_X$, we denote by~$\BB(X)$ its Borel $\sigma$-algebra and by~$\PP(X)$ the set of probability measures on~$(X,\BB(X))$. If $\mu_1,\mu_2\in\PP(X)$, then $\|\mu_1-\mu_2\|_{\mathrm{var}}$ denotes the {\it total variation distance\/} between~$\mu_1$ and~$\mu_2$:
$$
\|\mu_1-\mu_2\|_{\mathrm{var}}=\sup_{\Gamma\in\BB(X)}|\mu_1(\Gamma_1)-\mu_2(\Gamma)|.
$$
We shall also need the {\it dual-Lipschitz distance\/} defined by 
$$
\|\mu_1-\mu_2\|_L^*=\sup_{f}\biggl|\int_Xf\dd\mu_1-\int_Xf\dd\mu_2\biggr|,
$$
where the supremum is taken over all continuous functions $f:X\to\R$ such that 
$$
\sup_{x\in X}|f(x)|+\sup_{0<\dd_X(x,y)\le 1}\frac{|f(x)-f(y)|}{\dd_X(x,y)}\le 1. 
$$
 Given a random variable~$\xi$, we write $\DD(\xi)$ for its law. If $X$ is a vector space and $r>0$ is a number, then $B_X(r)$ denotes the open ball in~$X$ of radius~$r$ centred at zero and ${\overline B}_X(r)$ denotes its closure. If~$A$ is a set, then~${\mathbf1}_A$ stands for its indicator function and~$A^c$ for its complement.

\section{Dynamical systems driven by a Markovian noise}
\label{s-DSMS}

\subsection{A class of random dynamical systems}
Let $X$ and $\KK$ be Polish spaces, $S:X\times\KK\to X$ be a continuous map, and  $\{\eta_k\}_{k\in\Z_+}$ be a Markov process in the space~$\KK$ with a transition probability $\{Q(y;\cdot), y\in \KK\}$. We shall always assume that~$Q$ possesses the Feller property, so that, for any $f\in C_b(\KK)$, the function $y\mapsto \int_\KK Q(y;\dd z)f(z)$ is continuous on~$X$. A simple consequence of the Markov property is that, for any integer $k\ge0$ and any bounded measurable function $f:\KK^{k+2}\to\R$, we have 
\begin{equation}\label{MP-k}
	\E\{f(\eta_0,\dots,\eta_{k+1})\,|\,\sigma(\eta_0,\dots,\eta_k)\}=\int_\KK f(\eta_0,\dots,\eta_{k},z)Q(\eta_k;\dd z).
\end{equation}
Indeed, relation~\eqref{MP-k} is obvious for functions of the form $f(y_0,\dots,y_{k+1})=g(y_0,\dots,y_k)h(y_{k+1})$ and can easily be derived from that particular case with the help of standard arguments.

Let consider the RDS~\eqref{RDS}, \eqref{IC} in~$X$. Note that the law of a trajectory $\{u_k(u),k\ge0\}$ will not change if in~\eqref{RDS} we replace~$\{\eta_k\}$ with another process~$\{\tilde\eta_k\}$ with the same distribution, which may be defined on a different probability space. Together with~\eqref{RDS}, we study an RDS in the extended phase  space $\XXXX=X\times\KK$, whose points will be denoted by $U=(v,\xi)$. To describe it, we consider a Markov process $\{\zeta_k, k\ge0\}$ in~$\KK$, defined on a filtered space $(\Omega,\FF,\{\FF_k\}_{k\in\Z_+})$ with a probability measure~$\IP$ such that the time-$1$ transition probability is equal to~$Q(y;\cdot)$. Let us note that, in contrast to~$\{\eta_k\}$, the Markov process~$\{\zeta_k\}$ can be issued from any initial state $y\in\KK$. We now define the RDS
\begin{equation}\label{RDS-extended}
	U_k=\SSS(U_{k-1},\zeta_k), \quad k\ge1,
\end{equation}
where $\SSS:\XXXX\times \KK\to\XXXX$ is given by the formula $\SSS(U,\zeta)=\bigl(S(v,\zeta),\zeta\bigr)$ for $U=(v,\xi)\in\XXXX$ and $\zeta\in \KK$. Thus, writing $U_l=(v_l,\xi_l)$ for $l\in\Z_+$, we obtain $U_k=(S(v_{k-1},\zeta_k),\zeta_k)$. To define a trajectory of~\eqref{RDS-extended}, we add the initial condition
\begin{equation}\label{IC-extended}
	U_0=V:=(v,\xi),
\end{equation}
where $(v,\xi)\in\XXXX$ is an $\FF_0$-measurable random variable. In what follows, we denote the trajectory of~\eqref{RDS-extended}, \eqref{IC-extended} by $\{U_k(V)\}_{k\in\Z_+}$. 

\begin{proposition}\label{p-markov}
	The family of trajectories $\{U_k(V)\}_{k\in\Z_+}$ corresponding to all possible initial conditions $V=(v,\xi)\in\XXXX$ form a homogeneous Markov process in~$\XXXX$ with respect to the filtration~$\{\FF_k\}$ associated with the process~$\{\zeta_k\}$. Moreover, the following two assertions hold. 
	\begin{itemize}
		\item [\hypertarget{a}{\bf(a)}] The time-$1$ transition probability $\{\PPP(U;\cdot),U\in\XXXX\}$ of the process~$\{U_k\}$ is given by the relation
\begin{equation}\label{TF-extended}
\PPP(U;\cdot)=\SSS_*\bigl((v,\xi),Q(\xi;\cdot)\bigr),
\end{equation}
where the map $\SSS((v,\xi),\cdot):\KK\to\XXXX$ sends~$\zeta$ to $(S(v,\zeta),\zeta)$.
		\item[\hypertarget{b}{\bf(b)}] If an  $\FF_0$-measurable random initial state has the form~$V=(u,\xi)$, where $u\in X$ is deterministic and $\DD(\xi)=\DD(\eta_0)$, then for any $k\ge0$ the trajectory $\{U_k(V)\}$, $U_k=(v_k,\xi_k)$, satisfies the relation
\begin{equation}\label{laws}
\DD([v_0,\dots,v_k])=\DD([u_0,\dots,u_k]),
\end{equation}
where $\{u_k\}_{k\in\Z_+}$ is the trajectory of~\eqref{RDS}, \eqref{IC}. 
\end{itemize}
\end{proposition}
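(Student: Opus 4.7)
The plan is to derive both the Markov property and the transition formula of part~(a) from a single computation based on the Markov property of~$\{\zeta_k\}$. The key structural observation is that $U_{k+1}=\SSS(U_k,\zeta_{k+1})$ is a deterministic measurable function of~$U_k$ and the single noise increment~$\zeta_{k+1}$. Interpreting~$\xi$ as the starting state~$\zeta_0$ of the noise chain, so that the whole sequence $\{\zeta_k\}_{k\ge0}$ is $\{\FF_k\}$-adapted and Markov with transition~$Q$, I would first check by induction that $U_k$ is $\FF_k$-measurable and that its second coordinate~$\xi_k$ coincides with~$\zeta_k$ for every $k\ge 0$.

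Given this, for an arbitrary bounded measurable $f:\XXXX\to\R$, formula~\eqref{MP-k} with $k=0$ yields
\begin{equation*}
\E\bigl\{f(U_{k+1})\mid\FF_k\bigr\}
=\E\bigl\{f(\SSS(U_k,\zeta_{k+1}))\mid\FF_k\bigr\}
=\int_\KK f\bigl(\SSS(U_k,z)\bigr)\,Q(\xi_k;\dd z),
\end{equation*}
where the $\FF_k$-measurability of~$U_k$ lets us freeze it in the last integral. By the very definition of~$\SSS_*$ the right-hand side equals $\int_\XXXX f(U')\,\PPP(U_k;\dd U')$ with~$\PPP$ as in~\eqref{TF-extended}. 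This simultaneously establishes the Markov property with respect to~$\{\FF_k\}$ and identifies the time-$1$ transition kernel; homogeneity is automatic, since neither~$Q$ nor~$\SSS$ depends on time.

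For part~(b) the idea is simply that the two systems are driven by noise sequences with identical joint laws. Under the hypothesis $\DD(\xi)=\DD(\eta_0)$, the sequence $(\xi,\zeta_1,\zeta_2,\ldots)$ is a Markov chain in~$\KK$ with transition~$Q$ and initial distribution matching that of $(\eta_0,\eta_1,\ldots)$, so the two noise sequences agree in finite-dimensional law. The trajectory $(v_0,v_1,\ldots,v_k)$ is then the image of $(v,\xi,\zeta_1,\ldots,\zeta_k)$ under the same iterated composition of~$S$ that produces $(u_0,\ldots,u_k)$ from $(u,\eta_0,\eta_1,\ldots,\eta_k)$; since $v_0=v=u=u_0$, equality of laws at the input transfers to equality of laws at the output by a direct push-forward argument, which yields~\eqref{laws}.

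I expect the only delicate point to be the bookkeeping at the initial step, namely the identification of the $\FF_0$-measurable random variable~$\xi$ with the starting state~$\zeta_0$ of the noise chain so that~\eqref{MP-k} applies uniformly from $k=0$ and so that the conditional law of~$\zeta_1$ given~$\FF_0$ is~$Q(\xi;\cdot)$. Once this identification is made explicit, both parts reduce to routine applications of the Markov property of~$\{\zeta_k\}$ and of the definition of the push-forward under~$\SSS$.
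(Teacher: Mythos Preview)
Your proposal is correct and follows essentially the same route as the paper: the paper's reduction to product-form functions $g(v)h(\xi)$ followed by a density argument is precisely the standard proof of the ``freezing'' identity you invoke directly, and its treatment of~(b) by induction likewise amounts to your push-forward argument. One minor correction: your appeal to~\eqref{MP-k} ``with $k=0$'' is misphrased---what you actually use is the Markov property of $\{\zeta_k\}$ with respect to the filtration~$\{\FF_k\}$ (not the natural filtration of~$\{\eta_k\}$) together with the $\FF_k$-measurability of~$U_k$, and this is exactly what the paper spells out.
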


\begin{proof}
	To prove that the trajectories $\{U_k(V)\}$ form a Markov process, it suffices to show that, for any $k\in\Z_+$ and any bounded measurable function $f:\XXXX\to\R$, we have 
	\begin{equation}\label{MP-proof}
		\E\,\bigl\{f(U_{k+1}(V)\,|\,\FF_k\bigr\}=\hat f\bigl(U_{k}(V)\bigr),
	\end{equation}
where $\hat f:\XXXX\to\R$ is a measurable function. To see this, let us note that if~\eqref{MP-proof} is established for bounded continuous functions, then the	general case will follow by a standard argument. To establish~\eqref{MP-proof} for $f\in C(\XXXX)$, we first assume that the function $(v,\xi)\mapsto f(S(v,\xi),\xi)$ can be represented in the form $g(v)h(\xi)$. In this case, using~\eqref{RDS-extended} and the Markov property for~$\{\zeta_k\}$, we can write 
\begin{multline}
\E\,\bigl\{f(U_{k+1}(V)\,|\,\FF_k\bigr\}
	=\E\,\bigl\{g(v_k)h(\eta_{k+1})\,|\,\FF_k\bigr\}
	=g(v_k)\,\E\,\bigl\{h(\eta_{k+1})\,|\,\FF_k\bigr\}\\
	=g(v_k)\int_\KK Q(\zeta_k;\dd y)h(y)
	=\int_\KK Q(\zeta_k;\dd y)f(S(v_k,y),y).\label{MP-particular}
\end{multline}
This proves~\eqref{MP-proof} in the above-mentioned particular case. Relation~\eqref{MP-proof} in the general situation can be proved by a simple approximation argument, based on the density of linear combinations of the functions of the form $g(v)h(\xi)$ in the space of continuous functions.  

Taking $k=1$ and $f={\mathbf1}_\Gamma$ in~\eqref{MP-particular}, we arrive at relation~\eqref{TF-extended} for the transition probability. Finally, assertion~\hyperlink{b}{(b)} can be established by induction in~$k$, applying the argument used in the proof of~\cite[Lemma~1.2]{KS-2025}.
\end{proof}

Proposition~\ref{p-markov} allows one to reduce the question of the large-time asymptotics of trajectories for the RDS~\eqref{RDS} driven by a Markovian noise to a similar problem for the Markov process~\eqref{RDS-extended}. Roughly speaking, Theorem~\ref{t-mixing-markov} below proves the exponential stability of the dynamics defined by~\eqref{RDS}, provided that the Markov process~$\eta_k$ possesses appropriate mixing properties. 

\subsection{Main result and scheme of its proof}
\label{s-MR-scheme}
In this section, we denote by~$H$ and~$E$  finite-dimensional Euclidean spaces with norms~$|\cdot|$ and~$\|\cdot\|$, respectively, and by $X\subset H$ and $\KK\subset E$ compact subsets containing the zeros of the respective spaces. Let us assume that we are given  a $C^2$-smooth map $S:H\times E\to H$ such that $S(0,0)=0$, $S(X\times\KK)\subset X$, and the following two properties hold:

\begin{description}
	\item [\hypertarget{(D)}{Dissipativity.}] \sl 
	For any $\e>0$ there is an integer $n_\e\ge1$ such that 
\begin{gather}
	|S_{n_\e}(u;0,\dots,0)|\le \e\quad\mbox{for $u\in X$},\label{stability}
\end{gather}
where $\{S_n(u;\zeta_1,\dots,\zeta_n)\}$ denotes  the trajectory of system~\eqref{RDS}, \eqref{IC} in which $\eta_j=\zeta_j$. 

	\item [\hypertarget{(C)}{Controllability.}] 
The map $(D_\eta S)(0,0):E\to H$ is surjective and the map $(D_u S)(0,0):H\to H$ is an isomorphism.
\end{description}

Let us consider the RDS~\eqref{RDS}, in which~$\{\eta_k\}$ is a Markov process in~$E$ (not necessarily stationary) with a Feller continuous time-$1$ transition probability~$Q(y;\Gamma)$ such that $\supp\DD(\eta_0)\subset\KK$ and $\supp Q(y;\cdot)\subset\KK$ for any $y\in \KK$. We impose the following two hypotheses:
\begin{description}\sl 
	\item [\hypertarget{(M)}{Minorisation.}]
There exists a number $r>0$ and a bounded continuous function $\rho:{\overline B}_E(r)\times E\to\R_+$ such that $\rho(0,0)>0$ and 
\begin{equation}\label{TFn-LB}
	Q(y;\dd z)\ge\rho(y,z)\ell(\dd z)\quad\mbox{for $y\in \KK\cap {\overline B}_E(r)$},
\end{equation}
where $\ell\in\PP(E)$ is the Lebesgue measure on~$E$. 
\item [\hypertarget{(SR)}{Strong recurrence.}] For any $\delta>0$ there is an integer $l\ge1$ and number $\varkappa>0$ such that
\begin{equation}\label{Q-to-zero}
		Q_l\bigl(y;B_E(0,\delta)\bigr)\ge \varkappa\quad\mbox{for any $y\in\KK$},
\end{equation}
where $\{Q_k(y;\cdot), k\in\Z_+,y\in E\}$ is the transition probability for~$\{\eta_k\}$. 
\end{description}
These two conditions ensure that the Markov process~$\{\eta_k\}$ converges exponentially in the total variation metric to a stationary measure $\nu\in\PP(E)$; e.g., see~\cite[Chapters~15 and~16]{MT1993} or~\cite[Theorem~3.1]{shirikyan-2017}. The following theorem, which is the main result of this section, shows that the same is true for the RDS~\eqref{RDS}. Let us introduce the direct product $\XXX=X^\Z$, endowed with the Tikhonov topology, and denote by~$\PP_s(\XXX)$ the set of measures in~$\PP(\XXX)$ that are invariant under translations. 

\begin{theorem}\label{t-mixing-markov}
	Let us assume that the \hyperlink{(D)}{Dissipativity}, \hyperlink{(C)}{Controllability}, \hyperlink{(M)}{Minorisation}, and \hyperlink{(SR)}{Strong recurrence} hypotheses hold. Then there is a measure $\mmu\in\PP_s(\XXX)$ and a number~$\gamma>0$ such that, for any initial state $u\in X$, the corresponding trajectory~$\{u_k\}_{k\in\Z_+}$ of~\eqref{RDS}, \eqref{IC} satisfies the inequality 
	\begin{equation}\label{expo-mixing}
\|\DD([u_k,\dots,u_{k+m}])-\mmu_m\|_{\mathrm{var}}\le C_me^{-\gamma k}, \quad k\ge0,
	\end{equation}
where $\mmu_m$ denotes the projection of~$\mmu$ to~$X^{m+1}$, and~$C_m>0$ is a number not depending on the initial condition. 
\end{theorem}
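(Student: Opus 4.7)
My plan is to combine the Markovian reduction from Proposition~\ref{p-markov} with the exponential stability result of~\cite{KS-2025} for the extended process~$\{U_k\}$ in the dual-Lipschitz metric, and then upgrade this dual-Lipschitz convergence to total variation convergence by exploiting the smoothing effect of one step of the transition kernel~$\PPP$.

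\textbf{Step 1 (reduction).} First I would fix an initial point $u\in X$, take $\xi$ to be a random variable independent of the process~$\{\zeta_k\}$ with $\DD(\xi)=\DD(\eta_0)=:\nu$, and consider the trajectory $\{U_k(V)\}$ of the extended RDS~\eqref{RDS-extended} starting from $V=(u,\xi)$. By Proposition~\ref{p-markov}\hyperlink{b}{(b)}, the joint law of $(v_0,\dots,v_{k+m})$ coincides with the joint law of $(u_0,\dots,u_{k+m})$, so it suffices to control the law of $(v_k,\dots,v_{k+m})$ inside the Markov framework. The hypotheses \hyperlink{(M)}{Minorisation} and \hyperlink{(SR)}{Strong recurrence} ensure that $\{\eta_k\}$ is exponentially mixing in total variation toward a unique stationary measure $\nu\in\PP(\KK)$, in particular $\nu$ is well-defined and supported in~$\KK$.

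\textbf{Step 2 (dual-Lipschitz stability of $\{U_k\}$).} Next I would verify that the \hyperlink{(D)}{Dissipativity}, \hyperlink{(C)}{Controllability}, and \hyperlink{(M)}{Minorisation} hypotheses, together with the mixing of~$\{\eta_k\}$, imply the hypotheses of Theorem~1.3 in~\cite{KS-2025} for the extended Markov process~$\{U_k\}$ on $\XXXX=X\times\KK$ (which here is finite dimensional, so the situation is actually simpler than in~\cite{KS-2025}). This yields a unique stationary measure $\mmu\in\PP(\XXXX)$ and constants $C_0,\gamma_0>0$ such that, for every $V\in\XXXX$,
\begin{equation}\label{pl-dl}
\|\PPP_k(V;\cdot)-\mmu\|_L^*\le C_0 e^{-\gamma_0 k},\qquad k\ge0,
\end{equation}
where $\PPP_k$ is the $k$-step transition probability of~$\{U_k\}$.

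\textbf{Step 3 (one-step smoothing from dual-Lipschitz to total variation).} The key observation, alluded to after the Main Theorem in the introduction, is that the Controllability and Minorisation hypotheses imply that the projection $\Pi_*\PPP(U;\cdot)$ on the $X$-component is absolutely continuous with a density $g(U,x)$ that is Lipschitz in $U\in\XXXX$. More precisely, by~\eqref{TF-extended} and~\eqref{TFn-LB}, the measure $\Pi_*\PPP(U;\cdot)$ is the image of $Q(\xi;\cdot)$ under the map $\zeta\mapsto S(v,\zeta)$, which thanks to surjectivity of $(D_\eta S)(0,0)$ and the appendix result on images of measures under regular maps yields the claimed Lipschitz density. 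Using this, I would prove the key lemma: if $\lambda_n\to\lambda$ in $\|\cdot\|_L^*$ on $\PP(\XXXX)$, then
\[
\bigl\|\Pi_*(\lambda_n\PPP)-\Pi_*(\lambda\PPP)\bigr\|_{\mathrm{var}}\to 0,
\]
with an explicit rate proportional to $\|\lambda_n-\lambda\|_L^*$ (this is Corollary~\ref{c-lip} from the text).

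\textbf{Step 4 (multi-time joint laws).} Combining Steps~2 and~3 gives \eqref{expo-mixing} for $m=0$: applying one step of~$\PPP$ to \eqref{pl-dl} at time $k-1$ converts the dual-Lipschitz bound into a total variation bound on the $X$-marginal at time~$k$, and the projection of $\mmu\PPP=\mmu$ to~$X$ is the candidate~$\mmu_0$. For $m\ge1$, I would condition on $U_k$ and use the Markov property: the conditional law of $(v_{k+1},\dots,v_{k+m})$ given $U_k$ is a continuous (in fact Lipschitz) function of~$U_k$ with values in $\PP(X^m)$ equipped with the total variation metric, because each coordinate is obtained from the previous one by the smoothing kernel of Step~3. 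Hence integrating this conditional law against $\DD(U_k)$ and against~$\mmu$ respectively, and using the dual-Lipschitz bound~\eqref{pl-dl} for $\DD(U_k)-\mmu$ together with the Lipschitz dependence, yields \eqref{expo-mixing} with a constant $C_m$ depending on~$m$.

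\textbf{Main obstacle.} The technical heart of the argument is the smoothing estimate in Step~3, i.e.\ obtaining the Lipschitz density $g(U,x)$ for $\Pi_*\PPP(U;\cdot)$ uniformly in~$U$. This requires transferring the minorisation~\eqref{TFn-LB}, which holds only in a neighbourhood of the origin in~$\KK$, to all of~$X\times\KK$ via the strong recurrence and the local surjectivity $(D_\eta S)(0,0)$; the latter must be propagated along trajectories using the $C^2$ regularity and compactness of~$X\times\KK$, presumably via a finite-step version of the transition kernel rather than the one-step kernel~$\PPP$ itself. Once this smoothing is in place, the remaining steps are essentially bookkeeping.
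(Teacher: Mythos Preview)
Your proposal follows the strategy the paper uses for Theorem~\ref{t-mixing-stationary} (the stationary-noise case), not the one it uses for Theorem~\ref{t-mixing-markov}. The paper's actual proof of Theorem~\ref{t-mixing-markov} is more direct: it verifies the classical \hyperlink{(Rec)}{(Rec)} and \hyperlink{(Cou)}{(Cou)} conditions for the extended process~$\{U_k\}$ on $\XXXX=X\times\KK$, which immediately yields \emph{total variation} mixing~\eqref{mixing-for-U} for~$\{U_k\}$ without ever passing through the dual-Lipschitz metric or invoking~\cite{KS-2025}. The coupling condition~\eqref{coupling} is checked only on a small ball around $\widehat U=(0,0)$, via a lower bound $\PPP_2(U;\cdot)\ge\lambda$ obtained from a local diffeomorphism argument (Lemma~\ref{l-diffeo}); recurrence~\eqref{recurrence} comes from \hyperlink{(D)}{Dissipativity} and \hyperlink{(SR)}{Strong recurrence}.

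There is a genuine gap in your route under the hypotheses of Theorem~\ref{t-mixing-markov}. Both your Step~2 and Step~3 rely on ingredients that require the \emph{stronger} hypotheses of Section~\ref{s-dssn}: Theorem~1.3 of~\cite{KS-2025} and Corollary~\ref{c-lip} need $(D_\eta S)(u,\eta)$ surjective for \emph{all} $(u,\eta)\in X\times\KK$ and a global Lipschitz density for $Q(\xi;\cdot)$. Theorem~\ref{t-mixing-markov} assumes only surjectivity at $(0,0)$ and a \emph{lower bound} $Q(y;\dd z)\ge\rho(y,z)\ell(\dd z)$ near the origin --- $Q(\xi;\cdot)$ need not be absolutely continuous at all, so $\Pi_*\PPP(U;\cdot)$ has no reason to possess a density for general $U$, and the smoothing Lemma you invoke simply does not apply. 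Your ``Main obstacle'' paragraph correctly locates the difficulty but the proposed fix (propagating via a finite-step kernel) cannot manufacture a density out of a mere minorisation. The paper's approach sidesteps this entirely: the \hyperlink{(Cou)}{(Cou)} condition is local near~$(0,0)$, exactly where Controllability and Minorisation bite, and that is enough for Doeblin-type total variation mixing.
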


A proof of Theorem~\ref{t-mixing-markov} is given in the next section. Here we describe briefly the main idea. In view of Proposition~\ref{p-markov}, it suffices to establish the exponential mixing for the Markov process associated with the RDS~\eqref{RDS-extended}, which 
is considered on the compact invariant set~$\XXXX=X\times\KK$. To this end, it suffices to check the following {\it strong recurrence\/} and {\it coupling\/} properties for some point $\widehat U\in\XXXX$; see~\cite[Section~15.4]{MT1993} or~\cite[Section~3.1]{shirikyan-2017}: 
\begin{description}\sl 
	\item [\hypertarget{(Rec)}{(Rec)}]
For any $r>0$, there is an integer~$m\ge1$ and a number $p>0$ such that 
\begin{equation}\label{recurrence}
	\PPP_m\bigl(U,B_\XXXX(\widehat U,r)\bigr)\ge p\quad\mbox{for any $U\in\XXXX$},
\end{equation}
where $\PPP_m(U;\Gamma)$ is the time-$m$ transition probability for~\eqref{RDS-extended}; cf.~\eqref{TF-extended}.  
	\item [\hypertarget{(Cou)}{(Cou)}] There exists a number $\e>0$ and an integer $n\ge1$ such that  
\begin{equation}\label{coupling}
	\|\PPP_n(U_1;\cdot)-\PPP_n(U_2;\cdot)\|_{\mathrm{var}}\le 1-\e \quad\mbox{for}\quad    U_1,U_2\in {\overline B}_\XXXX(\widehat U,\e). 
\end{equation}
\end{description}
The validity of~\hyperlink{(Rec)}{(Rec)} with $\widehat U=(0,0)$ is a consequence  of~\eqref{stability} and the fact that the trajectories of the Markov process~$\{\zeta_k\}$ with the transition probability~$Q_k(y,\cdot)$ may stay close to zero on arbitrarily long time intervals.  Checking Hypothesis~\hyperlink{(Cou)}{(Cou)} is more delicate. It is based on the fact that, for $U\in \XXXX$ sufficiently close to zero,  the time-$2$ transition probability $\PPP_2(U;\cdot)$ is minorised by a positive measure. This will be proved with the help of a result on the image of measures under smooth maps. 

\subsection{Application}
\label{s-ode-kicked}
Let $H$ be the Euclidean space~$\R^d$ with the inner product $\langle\cdot,\cdot\rangle$ and the corresponding norm~$|\cdot|$. Consider the following ordinary differential equation (ODE) in~$H$:
\begin{equation}\label{ODE}
	\dot x=V(x)+\eta(t).
\end{equation}
Here $V:\R^d\to\R^d$ is a $C^2$-smooth vector field such that the following two conditions hold:
\begin{description}\sl 
\item[\hypertarget{D}{Dissipation.}] There are positive numbers~$C$ and~$c$ such that
\begin{equation}\label{dissipation-V}
	\langle V(x),x\rangle\le -c\,|x|^2+C \quad\mbox{for $x\in H$},
\end{equation}
\item[\hypertarget{(St)}{Stable equilibrium.}] All the trajectories of the vector field~$V(x)$ converge to the point $x=0$ as $t\to+\infty$, uniformly with respect to the initial conditions in any ball. 
\end{description}
Let us note that the latter condition is certainly fulfilled if~\eqref{dissipation-V} holds, and the vector field satisfies the inequality
\begin{equation}
	\langle V(x),x\rangle<0 \quad\mbox{for $x\in H\setminus\{0\}$}.
\end{equation}
As for $\eta$, we assume that it is a random process of the form 
\begin{equation}\label{eta-t}
	\eta(t)=\sum_{k=0}^\infty \eta_k\delta(t-k),
\end{equation}
where $\delta(t)$ is the Dirac mass at zero and~$\{\eta_k\}$ is a homogeneous Markov process in~$H$. 

Let us denote by~$\varphi:H\to H$ the time-$1$ shift along trajectories of Eq.~\eqref{ODE} with $\eta\equiv0$. It follows from~\eqref{dissipation-V} that
\begin{equation}\label{dissipation}
	|\varphi(x)|\le \beta\,|x|+C_1\quad
	\mbox{for $x\in H$},
\end{equation}
where $\beta\in(0,1)$ and $C_1>0$ are some numbers not depending on~$x$. The trajectories of~\eqref{ODE} have jumps at integer times, and we normalise them to be right-continuous. Denoting by~$x_k$ the value of a solution~$x(t)$ at time $t=k$, we see that relation~\eqref{RDS} holds with a map~$S$ given by $S(x,\eta)=\varphi(x)+\eta$. The following theorem describes the large-time asymptotics of trajectories of~\eqref{ODE} restricted to integer times.

\begin{theorem}\label{t-ODE}
In addition to the above-mentioned  properties, let us assume that the Markov process~$\{\eta_k\}$ satisfies the \hyperlink{(M)}{Minorisation} and \hyperlink{(SR)}{Strong recurrence} hypotheses. Then there is measure $\mmu\in\PP_s(H^\Z)$, supported by~$\KK^\Z$, and a number $\gamma>0$ such that, for any integer $m\ge0$ and some constants $C_m>0$, we have 
		\begin{equation}\label{ode-mixing}
\|\DD([x_k,\dots,x_{k+m}])-\mmu_m\|_{\mathrm{var}}\le C_me^{-\gamma k}\bigl(1+|x_0|\bigr), \quad k\ge0,
	\end{equation}
where $x_0\in H$ is an arbitrary initial condition, and~$\mmu_m$ denotes the projection of~$\mmu$ to~$H^{m+1}$. 
\end{theorem}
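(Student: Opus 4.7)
The plan is to deduce Theorem~\ref{t-ODE} from Theorem~\ref{t-mixing-markov} applied to the kicked map $S(x,\eta)=\varphi(x)+\eta$ on a compact forward-invariant ball $X\subset H$, and then to extend to unbounded initial data via the deterministic dissipation bound~\eqref{dissipation}. Setting $M:=\max_{\eta\in\KK}|\eta|<\infty$ (finite since $\{\eta_k\}$ lives in the compact set~$\KK$) and choosing $R_0>(C_1+M)/(1-\beta)$, the ball $X:=\overline{B}_H(R_0)$ satisfies $S(X\times\KK)\subset X$ by~\eqref{dissipation}. The Stable equilibrium hypothesis forces $V(0)=0$, hence $S(0,0)=\varphi(0)=0$, and $S$ inherits $C^2$-regularity from~$\varphi$. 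The Dissipativity assumption of Theorem~\ref{t-mixing-markov} on~$X$ is exactly the Stable equilibrium statement ($S_n(u;0,\dots,0)=\varphi^n(u)\to 0$ uniformly on~$X$); Controllability holds since $(D_\eta S)(0,0)=\Id$ is surjective and $(D_u S)(0,0)=D\varphi(0)$ is invertible (as~$\varphi$ is a diffeomorphism); Minorisation and Strong recurrence of~$\{\eta_k\}$ are assumed.

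Theorem~\ref{t-mixing-markov} then yields a measure~$\mmu$ on $H^\Z$ supported on~$X^\Z$, a rate~$\gamma_0>0$ and constants~$C_m'$ such that $\|\DD([u_k,\dots,u_{k+m}])-\mmu_m\|_{\mathrm{var}}\le C_m'e^{-\gamma_0 k}$ for every starting point $u\in X$. For arbitrary $x_0\in H$, iterating~\eqref{dissipation} gives the pathwise bound $|x_n|\le\beta^n|x_0|+(C_1+M)/(1-\beta)$, so with $n_0:=\lceil\log(1+|x_0|)/\log\beta^{-1}\rceil$ one has $x_{n_0}\in X$ almost surely. Conditioning on $(x_{n_0},\eta_{n_0})\in X\times\KK$ and applying the Markov property together with the above bound, I obtain for $k\ge n_0$:
\begin{equation*}
\|\DD([x_k,\dots,x_{k+m}])-\mmu_m\|_{\mathrm{var}}\le C_m'e^{-\gamma_0(k-n_0)}.
\end{equation*}
Choosing $\gamma:=\min(\gamma_0,\log\beta^{-1})$ makes $e^{\gamma n_0}\le e^\gamma(1+|x_0|)$, which yields~\eqref{ode-mixing} for $k\ge n_0$. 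For $k<n_0$ the variation distance is trivially bounded by~$2$, while $e^{-\gamma k}(1+|x_0|)\ge e^{-\gamma n_0}(1+|x_0|)\ge e^{-\gamma}$ by the same choice of~$\gamma$, so enlarging~$C_m$ closes the estimate.

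The one delicate point is the restart at time~$n_0$: the law of~$\eta_{n_0}$ is a non-explicit time-evolution of~$\DD(\eta_0)$, so the bound from Theorem~\ref{t-mixing-markov} can be invoked at the shifted time only if its constants are uniform over initial distributions of the noise. This is not apparent from the statement of Theorem~\ref{t-mixing-markov} but is built into its proof: as outlined in Section~\ref{s-MR-scheme}, the argument rests on exponential mixing of the extended Markov process on the \emph{compact} set $\XXXX=X\times\KK$ via Hypotheses~\hyperlink{(Rec)}{(Rec)}--\hyperlink{(Cou)}{(Cou)} and the standard Meyn--Tweedie coupling scheme, which is uniform over starting states $(u,\xi)\in\XXXX$. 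Once this uniformity is in hand, no further random-process estimate is needed beyond Theorem~\ref{t-mixing-markov} itself.
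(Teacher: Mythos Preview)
Your proof is correct and follows the same route as the paper: verify the hypotheses of Theorem~\ref{t-mixing-markov} for $S(x,\eta)=\varphi(x)+\eta$ on a large invariant ball, then use the contraction~\eqref{dissipation} to absorb arbitrary initial data into that ball in time $O(\log|x_0|)$. You are in fact more careful than the paper, which dismisses the absorption step as ``a simple calculation'' without discussing the uniformity of the constants in Theorem~\ref{t-mixing-markov} over the initial noise law; your observation that this uniformity comes for free from the underlying mixing estimate~\eqref{mixing-for-U} on the compact extended space $\XXXX=X\times\KK$ is exactly the right justification.
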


\begin{proof}
	We claim that if~$R>0$ is sufficiently large, then the ball $X={\overline B}_H(R)$ is invariant under the dynamics and absorbs the trajectories issued from any ball~$B_H(M)$ in a finite time of order~$\ln M$. To see this, we note that, in view of~\eqref{dissipation}, 
	\begin{equation}\label{lyapunov}
		|S(x,\eta)|\le \beta\,|x|+|\zeta|+C_1\quad\mbox{for any $x,\zeta\in H$}. 
	\end{equation} 
Denoting by~$\varkappa>0$ the maximum of the norms of the elements of~$\KK$, we deduce from~\eqref{lyapunov} the invariance of the ball~${\overline B}_H(R)$, provided that $R\ge \frac{\varkappa+C_1}{1-\beta}$. Setting $R\ge \frac{2(\varkappa+C_1)}{1-\beta}$, we also obtain the absorption property. 

What has been said implies that~\eqref{ode-mixing} will follow immediately from inequality~\eqref{expo-mixing}, in which $\{u_k\}=\{x_k\}$ is the trajectory corresponding to an arbitrary initial point in~$X={\overline B}_H(R)$. Indeed, in view of~\eqref{lyapunov}, any trajectory will hit the ball ${\overline B}_H(R)$ in a time of order $\ln|x_0|$, whence the required inequality can be obtained by a simple calculation. Thus, we only need to check the hypotheses of Theorem~\ref{t-mixing-markov} in the current setting. 

Let us set $E=H=\R^d$. The \hyperlink{(D)}{Dissipativity} follows from the \hyperlink{(St)}{Stable equilibrium} hypothesis. The maps entering \hyperlink{(C)}{Controllability} have the following form:
$$
(D_\eta S)(0,0)=\Id_{H},\quad (D_u S)(0,0)=D\varphi(0). 
$$
The surjectivity of $(D_\eta S)(0,0)$ is obvious, and the fact that $(D_u S)(0,0)$ is an isomorphism follows from well-known results on solutions of ODEs. Finally, the \hyperlink{(M)}{Minorisation} and \hyperlink{(SR)}{Strong recurrence} hypotheses are postulated, and the proof of the theorem is complete. 
\end{proof}

\section{Proof of Theorem~\ref{t-mixing-markov}}
\label{s-proof-MT}
\subsection*{Reduction}
By Proposition~\ref{p-markov}, the law of the vector~$[u_k,\dots,u_{k+m}]$ coincides with that of $[v_k,\dots,v_{k+m}]$, where $v_k$ is the first component of the trajectory~$U_k$ of~\eqref{RDS-extended} corresponding to an initial condition with the law $\delta_u\otimes\DD(\eta_0)$. Therefore, it suffices to prove inequality~\eqref{expo-mixing} for~$\{v_k\}$. To this end, we shall show that the Markov process defined by the RDS~\eqref{RDS-extended} possesses a unique stationary measure $\nu\in\PP(\XXXX)$, and there are positive numbers~$\gamma$ and~$C$ such that, for any $V\in\XXXX$ and $k\ge0$, 
\begin{equation}\label{mixing-for-U}
	\|\DD(U_k)-\nu\|_{\mathrm{var}}\le Ce^{-\gamma k}. 
\end{equation}
Assuming that this is proved, the proof can be concluded as follows. Let us denote by $\{V_k\}_{k\in\Z_+}$ a trajectory of~\eqref{RDS-extended} whose initial law is equal to~$\nu$ and write~$\nnu\in\PP_s(\XXXX^{\Z_+})$   for the law of the random variable $(V_k,k\in\Z_+)$. By the Markov property, it follows from~\eqref{mixing-for-U} that 
\begin{equation}\label{mixing-for-pathU}
	\|\DD([U_k,\dots,U_{k+m}])-\nnu_m\|_{\mathrm{var}}\le C_me^{-\gamma k}, \quad k\ge0, 
\end{equation}
where $\nnu_m\in\PP(\XXXX^{m+1})$ stands for the marginal of~$\nnu$ in~$\XXXX^{m+1}$.  Denoting by~$\mmu$ the projection of~$\nnu$ to the $X$-component, we obtain~\eqref{expo-mixing} as a straightforward consequence of~\eqref{mixing-for-pathU}. Thus, we need to establish~\eqref{mixing-for-U}. This will be done if we prove the validity of properties~\hyperlink{(Rec)}{(Rec)} and~\hyperlink{(Cou)}{(Cou)}. 

\subsection*{Checking~\hyperlink{(Rec)}{(Rec)}}
We wish to prove that~\eqref{recurrence} holds with $\widehat U=(0,0)$. To this end, we first show that inequality~\eqref{recurrence} is valid with some $p=p_1>0$ for initial points of the form $U=(v,0)$. Indeed, let us fix any $r>0$ and use~\eqref{stability} to find $m\ge1$ such that 
\begin{equation}\label{to-zero}
	|S_m(v;0,\dots,0)|\le \frac{r}{2}\quad\mbox{for $v\in X$}. 
\end{equation}
Since $S:H\times E\to H$ is continuous, so is $S_m:H\times E^m\to H$. Using~\eqref{to-zero} and the compactness of~$X$, we can find $\delta>0$ such that, if vectors $\xi_1,\dots,\xi_m\in E$ satisfy the inequality $\|\xi_k\|<\delta$ for $1\le k\le m$, then 
$$
	|S_m(v;\xi_1,\dots,\xi_m)|<r\quad\mbox{for any $v\in X$}. 
$$
Hence, the required inequality will be established if we prove that, for any $U=(v,0)$ with $v\in X$, 
\begin{equation}\label{ineq}
P_m(v,\delta):=\IP_U\bigl\{\|\zeta_k\|<\delta\mbox{ for }1\le k\le m\bigr\}\ge p_1>0,
\end{equation}
where the subscript~$U\in\XXXX$ indicates that we consider the trajectory issued from~$U$. This is a consequence of the observation that $\zeta_1,\dots,\zeta_m$ is the trajectory issued from~$\zeta_0=0$, and therefore inequality~\eqref{ineq} with a suitable $p_1=p_1(m)>0$ is true for any integer $m\ge1$ in view of the \hyperlink{(M)}{Minorisation} hypothesis. 

We now prove the validity of~\eqref{recurrence} for any $U\in \XXXX$. In view of the Feller property, compactness of~$X$, and inequality~\eqref{recurrence} with $U=(v,0)$, there is $\delta>0$ such that, for any $U=(v,\zeta)\in\XXXX$ with $\zeta\in B_E(0,\delta)$, we have 
$$
\PPP_m\bigl(U;B_\XXXX(0,r)\bigr)\ge p_1/2.
$$ 
If we find an integer $l\ge1$ and a number $p_2>0$ such that 
\begin{equation}\label{transition-X-0}
\PPP_l\bigl(U;X\times B_E(0,\delta)\bigr)\ge p_2\quad\mbox{for any $U\in \XXXX$}, 	
\end{equation}
then the required inequality~\eqref{recurrence} with $m$ replaced by $l+m$ will follow from the Kolmogorov--Chapman relation. The validity of~\eqref{transition-X-0} follows immediately from~\eqref{Q-to-zero} and relation~\eqref{TF-extended} for the transition probability. 

\subsection*{Checking~\hyperlink{(Cou)}{(Cou)}}
We shall prove the validity of~\eqref{coupling} for $n=2$. To this end, it suffices to find a non-negative measure~$\lambda\ne0$ on~$\XXXX$ and a number~$\delta>0$ such that
\begin{equation}\label{lower-bound}
\PPP_2(U;\cdot)\ge\lambda\quad\mbox{for $U\in {\overline B}_\XXXX(0,\delta)$};	
\end{equation}
in this case, \eqref{coupling} will be true with $\e=\lambda(\XXXX)$. This type of estimates for the images of probability measures under smooth maps is well known; see~\cite[Theorem~9.6.10]{bogachev2010}, \cite[Theorem~2.2]{AKSS-aihp2007} and~\cite[Theorem~2.4]{shirikyan-jfa2007}. However, inequality~\eqref{lower-bound} is not a consequence of known results, and we present a complete proof. 

To estimate the measure $\PPP_2(U;\cdot)$ from below, we consider integrals against it of bounded measurable functions $f:H\times E\to\R_+$. Namely, it follows  from~\eqref{RDS-extended} that, for any $U=(v,\zeta)\in\XXXX$, we have
\begin{align}
(\PPPP_2f)(U)&=
\int_\XXXX f(V)\PPP_2(U;\dd V)\notag\\
&=\int_E\int_E f\bigl(S_2(v;z_1,z_2),z_2\bigr)
Q(\zeta;\dd z_1)Q(z_1;\dd z_2),
\label{P2f}
\end{align}
where $\{\PPPP_k\}$ is the semigroup in $C(\XXXX)$ associated with the transition probability~$\PPP_k(U;\cdot)$. Roughly speaking, we wish to make a change of variable $w=S_2(v,z_1,z_2)$. To this end, we need the following lemma. 

\begin{lemma}\label{l-diffeo}
	Under the hypotheses of the theorem, there is a subspace $F=\{z^+\}$ in~$E$ of dimension~$\dim H$, with the orthogonal complement~$F^\bot=\{z^-\}$, and a number $\alpha>0$ such that, for any $(v,z_1^-,z_2)\in {\overline B}_H(\alpha)\times {\overline B}_{F^\bot}(\alpha)\times {\overline B}_E(\alpha)$ the $C^1$-smooth map 
	$$
	B_F(\alpha)\ni z_1^+\mapsto S_2(v,z_1^++z_1^-,z_2)\in H
	$$
	is a diffeomorphism onto its image that can be extended to a global diffeomorphism $\varPhi(\cdot; v, z_1^-,z_2)$ from~$F$ onto~$H$. 
	\end{lemma}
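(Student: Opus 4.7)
\medskip

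\textbf{Plan.} The goal is to linearise $S_2$ in a distinguished block of variables and then invoke the inverse function theorem with uniform control in parameters, before gluing the local inverse to a global diffeomorphism via a cut-off. Write $S_2(v,z_1,z_2)=S(S(v,z_1),z_2)$ and compute the partial derivative with respect to $z_1$ at the origin:
$$
D_{z_1}S_2(0,0,0)=(D_uS)(0,0)\circ(D_\eta S)(0,0).
$$
By the \hyperlink{(C)}{Controllability} hypothesis, $(D_\eta S)(0,0):E\to H$ is surjective, so its kernel admits an orthogonal complement $F\subset E$ of dimension $\dim H$ on which $(D_\eta S)(0,0)$ restricts to a linear isomorphism onto $H$. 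Composed with the isomorphism $(D_uS)(0,0)$, the map $L:=D_{z_1^+}S_2(0,0,0)\big|_F:F\to H$ is a linear isomorphism. This is the choice of $F$ and $F^\bot$ that the statement asks for.

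\medskip

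Next, I would apply a parametric inverse function theorem to the $C^1$ map
$$
\Psi(z_1^+;v,z_1^-,z_2):=S_2(v,z_1^++z_1^-,z_2)-S_2(0,z_1^-,z_2),
$$
which satisfies $\Psi(0;0,0,0)=0$ and $D_{z_1^+}\Psi(0;0,0,0)=L$. Since $S\in C^2$, the partial derivative $D_{z_1^+}\Psi$ is jointly continuous and coincides with the isomorphism $L$ at the base point; by a standard quantitative form of the IFT (contraction mapping estimate on $L^{-1}\Psi$), there exists $\alpha>0$ such that for every parameter $(v,z_1^-,z_2)\in\overline B_H(\alpha)\times\overline B_{F^\bot}(\alpha)\times\overline B_E(\alpha)$ the map $z_1^+\mapsto S_2(v,z_1^++z_1^-,z_2)$ is a $C^1$-diffeomorphism of $B_F(\alpha)$ onto its image, with derivative uniformly invertible. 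Compactness of the parameter box makes the uniformity in $(v,z_1^-,z_2)$ routine.

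\medskip

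The nontrivial step is the global extension to a diffeomorphism $\varPhi(\cdot;v,z_1^-,z_2):F\to H$. I would perform it via a smooth interpolation with the linearisation: pick a cut-off $\chi\in C^\infty(F,[0,1])$ with $\chi\equiv1$ on $B_F(\alpha/2)$ and $\chi\equiv0$ outside $B_F(\alpha)$, and set
$$
\varPhi(z_1^+;v,z_1^-,z_2):=\chi(z_1^+)\,S_2(v,z_1^++z_1^-,z_2)+\bigl(1-\chi(z_1^+)\bigr)\bigl(S_2(0,z_1^-,z_2)+L z_1^+\bigr),
$$
so that $\varPhi$ equals $S_2(v,\cdot+z_1^-,z_2)$ on $B_F(\alpha/2)$ and equals an affine isomorphism $L z_1^++c$ outside $B_F(\alpha)$. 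Shrinking $\alpha$ if necessary, the derivative of $\varPhi$ in $z_1^+$ differs from $L$ by a term that is $O(\alpha)$ on the transition zone, hence remains an isomorphism of $F$ onto $H$ everywhere; moreover $\varPhi$ is proper, since it coincides with an affine isomorphism outside a compact set. A $C^1$ map between Euclidean spaces of equal dimension that is proper and has everywhere invertible differential is a diffeomorphism (by Hadamard's global inverse theorem), which yields the required global diffeomorphism $\varPhi(\cdot;v,z_1^-,z_2)$ agreeing with $S_2$ on $B_F(\alpha/2)$.

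\medskip

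The main obstacle I expect is the last step: producing a globally defined diffeomorphism that faithfully represents $S_2$ on a neighbourhood of the origin while remaining surjective onto $H$. The uniform-in-parameters IFT estimates are standard but need to be carried out carefully so that the size of the neighbourhood on which the cut-off construction keeps the derivative invertible does not depend on $(v,z_1^-,z_2)$; this is where the $C^2$-regularity of $S$ and the compactness of the parameter set enter. Everything else is essentially bookkeeping around the linear-algebraic fact supplied by \hyperlink{(C)}{Controllability}.
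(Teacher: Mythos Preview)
Your strategy is the same as the paper's: use \hyperlink{(C)}{Controllability} and the chain rule to see that $D_{z_1}S_2(0,0,0)=(D_uS)(0,0)\circ(D_\eta S)(0,0)$ is surjective, split off a subspace~$F$ on which it is an isomorphism, apply a parametric inverse function theorem, and then glue the local inverse to an affine map via a cut-off. The paper verifies injectivity of the glued map by hand rather than invoking Hadamard's theorem, but that difference is cosmetic.

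There is, however, a slip in your extension formula that breaks the $O(\alpha)$ estimate you claim. In the transition annulus $\alpha/2\le|z_1^+|\le\alpha$ the derivative of your~$\varPhi$ picks up the term
\[
D\chi(z_1^+)\,\bigl[S_2(v,z_1^++z_1^-,z_2)-S_2(0,z_1^-,z_2)-Lz_1^+\bigr].
\]
The bracket contains $S_2(v,z_1^-,z_2)-S_2(0,z_1^-,z_2)\approx D_vS_2(0,0,0)\,v$, which is $O(|v|)=O(\alpha)$ and does not vanish with~$z_1^+$; since $\|D\chi\|\sim\alpha^{-1}$, the product is $O(1)$, not $O(\alpha)$, and you lose control of the invertibility of $D_{z_1^+}\varPhi$. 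The fix is to centre the affine piece at the \emph{parameter-dependent} value: replace $S_2(0,z_1^-,z_2)$ by $S_2(v,z_1^-,z_2)$ in your definition of~$\varPhi$. Then the bracket becomes
\[
S_2(v,z_1^++z_1^-,z_2)-S_2(v,z_1^-,z_2)-Lz_1^+=\bigl[D_{z_1^+}S_2(v,z_1^-,z_2)-L\bigr]z_1^++O(|z_1^+|^2)=O(\alpha^2),
\]
and the estimate goes through. This is exactly what the paper does (it uses both the base value $\varPhi(z_0^+)$ and the derivative $A=D\varPhi(z_0^+)$ at the actual parameter point). With that correction your argument is complete.
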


\begin{proof}
	We first note that any local diffeomorphism can be extended to a global one in the following sense: if~$F$ is a vector space of dimension~$\dim H$ and $\varPhi:O_F\to O_H$ is a diffeomorphism between some open sets~$O_F$ and~$O_H$, then for any $z_0^+\in O_F$ there is a diffemorphism $\tilde\varPhi:F\to H$ that coincides with~$\varPhi$ in a neighbourhood of~$z_0^+$. 	To see this, it suffices to take a smooth truncating function $\chi:F\to\R$ equal to~$1$ for $|z|\le\frac12$ and vanishing for $|z|\ge1$, and to set 
	$$
	\tilde\varPhi(z^+)=\varPhi(z_0^+)+\Bigl(1-\chi\bigl(\tfrac{z^+-z_0^+}{\e}\bigr)\Bigr)A(z^+-z_0^+)+\chi\bigl(\tfrac{z^+-z_0^+}{\e}\bigr)\,\bigl(\varPhi(z^+)-\varPhi(z_0^+)\bigr), 
	$$ 
	where $A$ is the derivative of~$\varPhi$ at $z_0^+$, and $\e>0$ is a small parameter. Obviously, $\tilde\varPhi$ is $C^2$-smooth. It equals~$\varPhi$ near $z_0^+$, and 
	$$
	\tilde\varPhi(z^+)=\varPhi(z_0^+)+A(z^+-z_0^+)\quad
	\mbox{for $z^+\notin B^\e:=\{z^+\in F:|z^+-z_0^+|\le\e\}$}.
	$$
	Hence, if $z_1^+\ne z_2^+$, and both points lie outside~$B^\e$, then  $\tilde\varPhi(z_1^+)\ne \tilde\varPhi(z_2^+)$. Furthermore, since~$\varPhi$ is $C^2$-smooth, a direct calculation shows that $\tilde\varPhi(z_1^+)\ne \tilde\varPhi(z_2^+)$ if at least one of the points $z_1^+\ne z_2^+$ belongs to~$B^\e$ and $\e\ll 1$. Thus, $\tilde\varPhi$ is a $C^2$ diffeomorphism of~$F$. Obviously, the same construction works if~$\varPhi$ depends on a parameter~$q$ (where $q=(v,z_1^-,z_2)$ in our case) if we assume that~$q$ varies in a small neighbourhood of a given point. 
	
	Thus, we need to prove that there is a finite-dimensional subspace $F\subset E$ such that the function $z_1^+\mapsto S_2(v,z_1^++z_1^-,z_2)$ is a diffeomorphism of a small open ball~$B_F(\delta)$ onto its image, provided that the point $(v,z_1^-,z_2)\in X\times F^\bot\times E$ belongs to  a small neighbourhood of zero in the product space. To this end, let us recall that $S_2(v;z_1,z_2)=S(S(v,z_1),z_2)$, so that 
	$$
	(D_{z_1}S_2)(v;z_1,z_2)=(D_uS)(S(v,z_1),z_2)\circ (D_{z_1}S)(v,z_1). 
	$$
The \hyperlink{(C)}{Controllability} hypothesis and the relation $S(0,0)=0$ imply that operator $(D_{z_1}S_2)(0;0,0):E\to H$ is surjective. Therefore, we can find a subspace~$F\subset E$ of dimension~$\dim H$ such that the restriction of~$A$ to~$F$ is an isomorphism. The required assertion follows now from the implicit function theorem. 
\end{proof}

We now prove~\eqref{lower-bound}.  The \hyperlink{(M)}{Minorisation} hypothesis implies that, if $\delta>0$ is  sufficiently small, then for $\zeta,z_1\in B_E(\delta)$ we have 
\begin{equation}\label{zero-approx}
	Q(\zeta;\dd z_1)Q(z_1;\dd z_2)
	\ge \chi(z_1,z_2)\ell(\dd z_1)\ell(\dd z_2),
\end{equation}
where $\chi:E\times E\to\R_+$ is a continuous function, positive at zero and vanishing outside the direct product $B_E(\delta)\times B_E(\delta)$. We can assume that $\delta\le\alpha$, where $\alpha>0$ is the number defined in Lemma~\ref{l-diffeo}. Denoting by~$F\subset E$ and~$\varPhi$  the subspace and map constructed there and using~\eqref{P2f} and~\eqref{zero-approx}, for any bounded  measurable function $f:H\times E\to\R_+$ we can write 
\begin{align*}
(\PPPP_2f)(U)
&\ge\int_E\int_E
f\bigl(S_2(v;z_1,z_2),z_2\bigr)
\chi(z_1,z_2)\ell(\dd z_1)\ell(\dd z_2)\notag\\
&= \int\limits_F\ell(\dd z_1^+) \int\limits_{F^\bot\times E}f\bigl(\varPhi(z_1^+;v,z_1^-,z_2),z_2\bigr)\chi(z_1,z_2)\ell(\dd z_1^-)\ell(\dd z_2), 
\end{align*}
where we used the fact that $\varPhi$ and~$S_2$ coincide on the support of~$\chi$. Setting $q=(v,z_1^-,z_2)$ and carrying out the change of variable $w=\varPhi(z_1^+;q)$, we see that the right-most term in the above inequality can be written as 
$$
\int_F\ell(\dd w)\int_E f(w,z_2)\biggl\{\int_{F^\bot}D(w;q)\chi(\varPsi(w;q)+z_1^-,z_2)\ell(\dd z_1^-)\biggr\}\ell(\dd z_2),
$$
where $\varPsi(\cdot;q)$ stands for inverse of~$\varPhi(\cdot;q)$ and  $D(w;q)$  is the Jacobian of the transformation $z_1^+=\varPsi(w,q)$. We have thus proved inequality~\eqref{lower-bound} with a measure $\lambda=\lambda_v$ depending on~$v$: 
$$
\lambda_v(\dd w,\dd z_2)
=\biggl\{\int_{F^\bot}D(w;q)\chi(\varPsi(w;q)+z_1^-,z_2)\,\ell(\dd z_1^-)\biggr\}\ell(\dd w)\ell(\dd z_2). 
$$
It remains to note that the density of~$\lambda_v$ (i.e., the expression in the curly brackets) is a continuous function of its arguments $(v,w,z_2)$ that is positive at zero. We conclude that~\eqref{lower-bound} holds with a measure~$\lambda$ whose density is given by $\gamma\,{\mathbf1}_{W(\gamma)}(w,z_2)$, where $\gamma>0$ is a small number and $W(\gamma)=B_H(\gamma)\times B_E(\gamma)$. This completes the proof  of Theorem~\ref{t-mixing-markov}. 

\section{Dynamical systems driven by a stationary noise}
\label{s-dssn}
Now let us consider the RDS~\eqref{RDS}, where~$\{\eta_k\}$ is a stationary process in the Euclidean space~$E$ such that $\DD(\eta_k)$ is supported by a compact set~$\KK$. We introduce the direct product~$\bKK:=\KK^{\Z_-}$ and endow it with the metric 
\begin{equation}\label{dist}
\dd(\xxi,\xxi')=\sum_{k\in\Z_-}\iota^k|\xi_k-\xi_k'|, \quad \xxi=(\xi_k,k\in\Z_-), \quad \xxi'=(\xi_k',k\in\Z_-),
\end{equation}
where $\iota>1$ is an arbitrary fixed number. For any $\xxi\in\bKK$, we denote by~$Q(\xxi;\cdot)$ the conditional law of~$\eta_1$ given the past $\eta_k=\xi_k$, $k\in\Z_-$. Setting
\begin{equation}\label{denote}
\sigma=\DD(\{\eta_k,\in\Z_-\}), \quad \EE:=\supp\sigma\subset \bKK, 
\end{equation}
we recall that $Q(\xxi;\cdot)$ is uniquely defined on~$\EE$ up to a set of zero $\sigma$-measure. We shall always assume that the following property holds.

\begin{description}\sl 
	\item [\hypertarget{FP}{Feller property}.] The function $\xxi\mapsto Q(\xxi;\cdot)$ can be chosen to be continuous from~$\EE$ to the space~$\PP(\KK)$. 
\end{description}
Defining $\eeta_k:=(\eta_l,l\le k)$ for $k\in\Z_-$, we see that $\{\eeta_k\}$ is a homogeneous Markov process in~$\EE$ with the time-$1$ transition probability $\delta_\xxi\otimes Q(\xxi;\cdot)\in\PP(\EE)$. Defining the map $\sS:H\times \bKK\to H$ by the formula 
$$
\sS(u,\eeta)=S(u,\eta_0)\quad\mbox{for $u\in H$, $\eeta=(\eta_k,k\in\Z_-)\in\bKK$},
$$ 
it is straightforward to see that $\{u_k\}$ is a trajectory of~\eqref{RDS} if and only if 
\begin{equation}\label{RDS-reduced}
	u_k=\sS(u_{k-1},\eeta_k), \quad k\ge1. 
\end{equation}
We have thus reduced the RDS~\eqref{RDS} driven by a stationary noise to one driven by the stationary  Markovian noise~$\eeta_k$. The main difference with the problem studied in Section~\ref{s-MR-scheme} is that the noise space is now infinite dimensional. Combining the above-mentioned reduction with some methods developed to study Markov processes in infinite-dimensional spaces, we have proved in~\cite{KS-2025} the uniqueness of a stationary measure and its exponential stability in the dual-Lipschitz metric. We now recall the corresponding hypotheses and establish a stronger result about the convergence to the limiting measure in the total variation metric. 

Let us introduce the following three hypotheses, which are a simplified version of those in~\cite[Section~2.1]{KS-2025}. For any integer $k\ge1$, we define a transition  probability ${\mathbf Q}_k(\xxi;\cdot)$ from~$\EE$ to~$\KK^k$ by the relation 
\begin{equation}\label{conditional-law}
	{\mathbf Q}_k(\xxi;\dd y_1,\dots,\dd y_k)
	=Q(\xxi;\dd y_1) Q(\xxi_1;\dd y_2)\cdots Q(\xxi_{k-1};\dd y_k),
\end{equation}
where $\xxi_l =(\xxi, y_1, \dots, y_l)$. Note that $	{\mathbf Q}_k(\xxi;\cdot)$ is the conditional law of $(\eta_1,\dots,\eta_k)$ given a past $\eta_l=\xi_l$, $l\in\Z_-$. For any $n,s\in\N$, we denote by ${\mathbf Q}_s^n(\xxi;\cdot)\in\PP(\KK^n)$ the projection of ${\mathbf Q}_{s+n}(\xxi;\cdot)$ to the last~$n$ components.

\begin{description}\sl 
	\item \hypertarget{LC}{{\bf Linearised controllability.}}
The map $D_\eta S(u,\eta):E\to H$ is surjective for any $u\in X$ and $\eta\in\KK$.
	\item \hypertarget{LR}{{\bf Lipschitz regularity.}}
There is a function $\rho:\EE\times E\to\R_+$ Lipschitz continuous in both variables such that, for any $\xxi\in\EE$, the measure $Q(\xxi;\cdot)$ is absolutely continuous with respect to the Lebesgue measure~$\ell$ with the density~$\rho(\xxi,\cdot)$: 
\begin{equation}\label{Q-density}
Q(\xxi;\dd y)=\rho(\xxi,y)\ell(\dd y). 
\end{equation}
\item \hypertarget{RZ}{{\bf Recurrence to zero.}} For any $n\ge1$ and $\delta>0$, there is $s\ge1$ such that
	\begin{equation}\label{rec-to-zero}
	\inf_{\xxi\in\EE}{\mathbf Q}_s^n\bigl(\xxi;\OO_\delta(\mathbf{0}_n)\bigr)>0,
	\end{equation}
	where $\OO_\delta({\mathbf0}_n)$ stands for the $n$-fold product of the ball $B_E(0,\delta)$.
\end{description}
The following result is an amplification of Theorem~2.5 in~\cite{KS-2025} for the case when the phase and control spaces are finite-dimensional.\footnote{We recall that the result of~\cite{KS-2025}  states the exponential convergence of $\DD([u_k,\dots,u_{k+m}])$ to a measure~$\mmu_m$ in the dual-Lipschitz metric.} 

\begin{theorem}\label{t-mixing-stationary}
	Suppose that the RDS~\eqref{RDS} satisfies the hypotheses of \hyperlink{(D)}{Dissipativity}, \hyperlink{LC}{Linearised controllability}, \hyperlink{LR}{Lipschitz regularity}, and \hyperlink{RZ}{Recurrence to zero}. Then there is a measure $\mmu\in\PP_s(\XXX)$ and a number~$\gamma>0$ such that, for any initial state $u\in X$, inequality~\eqref{expo-mixing} holds for~$\{u_k\}$ defined by~\eqref{RDS}, \eqref{IC} with a constant~$C_m>0$ not depending on~$u$. 
\end{theorem}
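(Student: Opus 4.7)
The plan is to combine the dual-Lipschitz exponential mixing proved in~\cite{KS-2025} for the Markovian reduction of~\eqref{RDS} with the smoothing effect that \hyperlink{LR}{Lipschitz regularity} and \hyperlink{LC}{Linearised controllability} impose on the transition kernel. First I would set up the Markovian reduction as in~\eqref{RDS-reduced}: the pair $U_k := (u_k, \eeta_k)$ is a Feller Markov process on the compact set $\XXXX := X \times \EE$, and the original trajectory $\{u_k\}$ is recovered as the $X$-component of the process started at $V = (u, \eeta_0)$ with $\eeta_0 \sim \sigma$, by the analogue of part~\hyperlink{b}{(b)} of Proposition~\ref{p-markov}. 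Then Theorem~2.5 of~\cite{KS-2025}, whose hypotheses are exactly those of the present theorem, yields a unique stationary measure $\nnu \in \PP(\XXXX)$ for the transition probability $\PPP$ and constants $C,\gamma>0$ such that
\[
\|\DD(U_k(V)) - \nnu\|_L^* \le C e^{-\gamma k} \quad \text{for all } V \in \XXXX,\ k \ge 0.
\]
At this point the convergence is only in the dual-Lipschitz metric; the remaining task is to upgrade it to total variation on $X^{m+1}$.

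The key upgrade, already announced in the introduction, is that the projection $\Pi_*\PPP(V;\cdot)$ on $H$ admits a density $g_1(V;x)$ with respect to the Lebesgue measure which is Lipschitz jointly in $(V,x) \in \XXXX \times H$. Indeed, $\Pi_*\PPP(V;\cdot) = S(u,\cdot)_*\bigl(\rho(\eeta,\cdot)\,\ell\bigr)$, the map $\eta \mapsto S(u,\eta)$ is a $C^2$ submersion by \hyperlink{LC}{Linearised controllability}, and $\rho$ is Lipschitz by \hyperlink{LR}{Lipschitz regularity}; the appendix pushforward lemma then produces $g_1$. Iterating, I would prove that for every $N \ge 1$ the joint projection $(\Pi U_1, \dots, \Pi U_N)$ started at $V_0$ has on $H^N$ a density $g_N(V_0; w_1, \dots, w_N)$ jointly Lipschitz in all its arguments: the map $(\eta_1, \dots, \eta_N) \mapsto (u_1, \dots, u_N)$ producing the forward trajectory has block lower-triangular Jacobian with each diagonal block $(D_\eta S)(u_{j-1},\eta_j)$ surjective by \hyperlink{LC}{Linearised controllability}, hence is itself a submersion, and the appendix lemma applies to the Lipschitz density $\prod_{j=1}^N \rho(\eeta_{j-1},\eta_j)$.

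Granting the multi-step smoothing, the conclusion is the $N$-fold analogue of Corollary~\ref{c-lip}. For $k \ge 1$, the law $\DD([u_k,\dots,u_{k+m}])$ has density $\bar w \mapsto \int_\XXXX g_{m+1}(V;\bar w)\,\DD(U_{k-1})(dV)$ on $H^{m+1}$, while the stationary marginal $\mmu_m$ (well-defined by uniqueness of $\nnu$ together with shift-invariance of the driving noise) has density $\bar w \mapsto \int_\XXXX g_{m+1}(V;\bar w)\,\nnu(dV)$. The total variation distance between them equals
\[
\int_{H^{m+1}} \biggl| \int_\XXXX g_{m+1}(V;\bar w)\,\bigl[\DD(U_{k-1}) - \nnu\bigr](dV) \biggr|\,d\bar w,
\]
and the joint Lipschitz continuity of $g_{m+1}$, together with the compactness of $\XXXX$ and of the support of $g_{m+1}(V;\cdot)$ uniformly in $V$, allows one to bound this by a constant multiple of $\|\DD(U_{k-1}) - \nnu\|_L^*$. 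Combining with the estimate from the first paragraph yields the claimed bound $C_m e^{-\gamma k}$ for $k \ge 1$; the case $k=0$ is trivial since TV is bounded by $2$.

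The main obstacle, in my view, is the iterated-smoothing step: verifying that the appendix pushforward lemma can be applied coherently to the $N$-fold map even though the memory variable $\eeta$ acquires new coordinates at every step, and that the resulting Lipschitz constants depend only on fixed derivatives of $S$ and $\rho$ and on $N$. The case $N=1$ is the appendix lemma itself and is exactly the one singled out in the introduction; the real work for Theorem~\ref{t-mixing-stationary} is pushing the same argument through the product submersion while keeping the dependence on $V_0$ uniformly Lipschitz, after which the rest of the argument is formal.
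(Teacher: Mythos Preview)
Your proposal is correct and follows essentially the same route as the paper: Markovian reduction to $\XXXX=X\times\EE$, dual-Lipschitz mixing from \cite[Theorem~2.5]{KS-2025}, the appendix pushforward lemma (Theorem~\ref{t-image-measure}) applied to the multi-step map $\vec S_m$ with the product density $\prod_j\rho(\xxi_{j-1},\eta_j)$, surjectivity via the block lower-triangular Jacobian (proved in the paper by the same induction you describe), and finally Corollary~\ref{c-lip} to upgrade to total variation. The only cosmetic difference is that the paper first isolates the case $m=0$ before treating general~$m$, whereas you go directly to the product; your worry about the growing memory variable is handled exactly as you suggest, since the parameter in the appendix lemma is $U=(v,\xxi)\in\XXXX$ and the Lipschitz dependence of~$\rho$ on~$\xxi$ (with the metric~\eqref{dist}) carries through the finite product.
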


\begin{proof}
We shall derive convergence~\eqref{expo-mixing} from a similar result for the dual-Lipschitz metric  in~\cite{KS-2025}. We first describe a construction used there to reduce the system under study  to a Markov process in a larger space.  
 
\smallskip
{\it Step~1: Reduction\/}. Recalling~\eqref{denote},  we define the space $\XXXX=X\times\EE$ with points $U=(v,\xxi)$ and introduce a map~$\SSS:\XXXX\times\EE\to\XXXX$ by the formula (cf.\ \eqref{RDS-extended})
\begin{equation}\label{SSS-map}
\SSS(U,\eta)=(S(v,\eta),(\xxi,\eta)\bigr). 	
\end{equation}
Let $\{\zeta^\xxi,\xxi\in\EE\}$ be a measurable random field in~$E$ such that $\DD(\zeta^\xxi)=Q(\xxi;\cdot)$ for any $\xxi\in\EE$. We denote by $(\Omega,\FF,\IP)$ the direct product of countably many copies of the probability space on which~$\zeta^\xxi$ is defined, write $\omega=(\omega_0,\omega_1,\dots)$ for its points, and consider the following RDS in~$\XXXX$ (cf.\ the construction in~\cite[Section~1.2]{KS-2025}):
\begin{equation}\label{RDS-stationary}
	U_k=\SSS\bigl(U_{k-1},\zeta^{\xxi_{k-1}}(\omega_k)\bigr), \quad k\ge1, 
\end{equation}
where $U_k=(v_k,\xxi_k)$. We supplement the system with the initial condition~\eqref{IC-extended}, in which~$V$ is a random variable in~$\XXXX$ depending only on~$\omega_0$. Relation~\eqref{RDS-stationary} implies that~$U_k$ is an $\XXXX$-valued random variable depending only on~$\omega^k=(\omega_0,\dots,\omega_k)$. A key observation allowing one to reduce the study of~\eqref{RDS} to that of~\eqref{RDS-stationary} is the following result established in~\cite[Lemma~1.4]{KS-2025}.

\begin{lemma}
	Let $V=V(\omega_0)$ be an $\XXXX$-valued random variable whose law is equal to~$\delta_u\otimes\sigma$ {\rm(}see~\eqref{denote}{\rm)}, where $u\in X$ is an arbitrary point. Then, for any integer $n\ge0$, we have
	$$
	\DD\bigl([v_0,\dots,v_n]\bigr)=\DD\bigl([u_0,\dots,u_n]\bigr),
	$$
	where $\{u_k,k\in\Z_+\}$ denotes the trajectory of~\eqref{RDS}, \eqref{IC}, and~$v_k$ stands for the first component of the process~$U_k$ defined by system~\eqref{RDS-stationary} with the initial condition~$U_0=V$. 
\end{lemma}

Thus, it suffices to establish convergence~\eqref{expo-mixing} for trajectories of~\eqref{RDS-stationary}. To make the main idea more transparent, we first consider the case $m=0$. 

\smallskip
{\it Step~2: Particular case\/}. In view of~\cite[Theorem~2.5]{KS-2025}, the Markov process associated with~\eqref{RDS-stationary} is exponentially mixing in the dual-Lipschitz metric. It means that there is a unique stationary measure $\MMM\in\PP(\XXXX)$, and for any initial condition $U\in\XXXX$, the corresponding trajectory~$\{U_k\}$ of~\eqref{RDS-stationary} satisfies the inequality
\begin{equation}\label{DU-M}
	\|\DD(U_k)-\DD(W_k)\|_L^*\le Ce^{-\gamma k}, \quad k\ge0,
\end{equation}
where~$\{W_k\}$ stands for the trajectory issued from an initial condition distributed as~$\MMM$, so $\DD(W_k)=\MMM$ for $k\ge0$. Let us denote by~$\Pi:\XXXX\to X$ the projection to the $X$-component of~$\XXXX$. We claim that
\begin{equation}\label{PiDU-M}
	\|\Pi_*\DD(U_k)-\Pi_*\DD(W_k)\|_{\mathrm{var}}\le C'e^{-\gamma k}, \quad k\ge0;
\end{equation}
this coincides with~\eqref{expo-mixing} when $m=0$. 

To prove~\eqref{PiDU-M}, we note that
$$
\Pi_*\PPP_1(U;\cdot)=\DD\bigl(S(v,\zeta^\xxi)\bigr) 
\quad\mbox{for any $U=(v,\xxi)\in\XXXX$}. 
$$ 
Combining the \hyperlink{LC}{Linearised controllability} and \hyperlink{LR}{Lipschitz regularity}  with Theorem~\ref{t-image-measure} in Appendix, we see that $\Pi_*\PPP_1(U;\cdot)=g(U,x)\ell(\dd x)$ for $U\in\XXXX$, where $g(U,x)$ is a Lipschitz function of its argument. Using the Markov property, we conclude that 
$$
\Pi_*\DD(U_{k})=\bigl(\E\,g(U_{k-1},x)\bigr)\ell(\dd x), \quad 
\Pi_*\DD(W_{k})=\bigl(\E\,g(W_{k-1},x)\bigr)\ell(\dd x). 
$$
The required inequality~\eqref{PiDU-M} follows now from~\eqref{DU-M} (with~$k$ replaced by~$k-1$) and Corollary~\ref{c-lip}. 

\smallskip
{\it Step~3: General case\/}. Following the same idea as in the case~$m=0$, we now derive a formula for $\Pi_*\DD(U_{1},\dots,U_{m})$ in terms of the law of~$Q(\xxi;\cdot)$. Namely, for any integer $m\ge0$ and any initial point $U=(v,\xxi)\in\XXXX$, we consider the segment~$[v_1,\dots,v_m]$ of the $\Pi$-projection of the trajectory $\{U_k=(v_k,\xxi_k),k\ge0\}$ for~\eqref{RDS-stationary} issued from~$U$. In view of~\eqref{SSS-map} and~\eqref{RDS-stationary}, we have
$$
[v_1,\dots,v_m]=\bigl[S\bigl(v,\zeta_1^\xxi\bigr),S_2\bigl(v;\zeta_1^\xxi,\zeta_2^{\xxi_1}\bigr),\dots,S_m\bigl(v;\zeta_1^\xxi,\dots,\zeta_m^{\xxi_{m-1}}\bigr)\bigr],
$$
where $\zeta_k^{\xxi_{k-1}}=\zeta^{\xxi_{k-1}}(\omega_k)$. Now note that the law of the vector $(\zeta_1^\xxi,\dots,\zeta_m^{\xxi_{m-1}})$ is equal to ${\mathbf Q}_m(\xxi;\cdot)$; see~\eqref{conditional-law}. It follows that $\DD([v_1,\dots,v_m])$ is equal to the image of ${\mathbf Q}_m(\xxi;\cdot)$ under the map 
$$
\vec{S}_m(U,\cdot):E^m\to H^m, \quad [\eta_1,\dots,\eta_m]\mapsto 
\bigl[S(v,\eta_1),\dots,S_m(v;\eta_1,\dots,\eta_m)\bigr]
$$
(which depends on~$U$ as a parameter). In view of inequality~\eqref{DU-M} and Corollary~\ref{c-lip} (where~$H$ and~$E$ are replaced with~$H^m$ and~$E^m$, respectively), the required convergence will be established if we prove that the \hyperlink{(S)}{Surjectivity} and \hyperlink{(LD)}{Lipschitz density} hypotheses are satisfied for the map~$\vec{S}_m$ and measure~${\mathbf Q}_m$. The second property follows immediately from the observation that 
$$
{\mathbf Q}_m(\xxi;\dd {\mathbf y}_m)=
\rho(\xxi,y_1)\,\rho\bigl((\xxi,y_1),y_2\bigr)\cdots\rho\bigl((\xxi,y_1,\dots,y_{m-1}),y_m\bigr)\,\ell(\dd{\mathbf y}_m), 
$$
where ${\mathbf y}_m=(y_1,\dots,y_m)$, and~$\ell$ is the Lebesgue measure on~$E^m$; see~\eqref{conditional-law} and~\eqref{Q-density}. 

To prove the surjectivity of $D\vec S_m(U,\cdot)$, we abbreviate $\eeta_k=[\eta_1,\dots,\eta_k]$ and note that 
$$
\vec{S}_{m+1}(\eeta_{m+1})=\bigl[v_m,S(v_m,\eta_{m+1})\bigr],
$$
where $v_m=\vec{S}_{m}(\eeta_{m})$. It follows that 
\begin{equation}\label{relation}
D\vec{S}(\eeta_{m+1})\zzeta_{m+1}=\bigl[A_m\zzeta_m, D_vS(v_m,\eta_{m+1})A_m\zzeta_m+D_\eta S(v_m,\eta_{m+1})\zeta_{m+1}\bigr],
\end{equation}
where $A_m=D\vec{S}_m(\eeta_m):E^m\to H^m$. For $m=1$, $D\vec{S}_1(\eeta_1)$ is surjective by the \hyperlink{LC}{Linearised controllability}. Arguing by induction, assume that for some $m\ge1$ the map $D\vec{S}_m(\eeta_m):E^m\to H^m$ is surjective. Then~\eqref{relation} and the \hyperlink{LC}{Linearised controllability} imply the surjectivity of $D\vec{S}_{m+1}(\eeta_{m+1}):E^{m+1}\to H^{m+1}$.  This completes the proof of the theorem. 
\end{proof}

\section{Appendix: image of probability measures under regular maps}
Let~$\XXXX$ be a Polish space, let~$E=\{y\}$ and~$H=\{x\}$ be finite-dimensional spaces, and let $\KK\subset E$ be a compact subset. We shall write~$\ell$ for the Lebesgue measure on any finite-dimensional space. Let us consider a continuous map $F:\XXXX\times E\to H$ such that, for any $U\in\XXXX$, the function $F(U,\cdot):E\to H$ is twice continuously differentiable in the neighbourhood of~$\KK$, its derivatives up to the second order are bounded uniformly in~$U$, and the differential~$D_yF(U,y)$ is a Lipschitz function of~$(U,y)$. Let $\{\lambda(U,\cdot)\}_{U\in\XXXX}$ be a transition probability from~$\XXXX$ to~$E$ such that the support of $\lambda(U,\cdot)$ is included into~$\KK$ for any $U\in\XXXX$. We impose the following hypotheses:

\begin{description}\sl 
	\item [\hypertarget{(S)}{Surjectivity.}]
For any $(U, y)\in\XXXX\times\KK$, the linear operator $(D_ y F)(U, y):E\to H$ is surjective.
	\item [\hypertarget{(LD)}{Lipschitz density.}] There is a Lipschitz continuous function $\rho:\XXXX\times E\to\R_+$ such that,  for any $U\in\XXXX$, we have
\begin{equation}\label{density-lambda}
	\lambda(U,\dd y)=\rho(U,y)\ell(\dd y).
\end{equation}
\end{description}

For any $U\in\XXXX$ and $\lambda\in\PP(E)$, we shall denote by $F_*(U,\lambda)\in\PP(H)$ the image of the measure~$\lambda$ under the map $ y\mapsto F(U, y)$ acting from~$E$ to~$H$. 

\begin{theorem}\label{t-image-measure}
	Let us assume that the \hyperlink{(S)}{Surjectivity} and \hyperlink{(LD)}{Lipschitz density} hypotheses are satisfied. Then there is a function $g:\XXXX\times H\to\R_+$ Lipschitz-continuous in both arguments such that, for $U\in\XXXX$, we have 
\begin{equation}\label{density-image}
	F_*\bigl(U,\lambda(U,\cdot)\bigr)
	=g(U,x)\,\ell(\dd x). 
\end{equation}
\end{theorem}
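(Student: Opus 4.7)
The plan is to construct $g(U,\cdot)$ as a sum of local densities, using the \hyperlink{(S)}{Surjectivity} hypothesis and the implicit function theorem to straighten the fibres of $F(U,\cdot)$, and then to extract joint Lipschitz regularity of~$g$ from the \hyperlink{(LD)}{Lipschitz density} assumption on~$\rho$.

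First, I would fix a point $(U_0,y_0)\in\XXXX\times\KK$ and, invoking the \hyperlink{(S)}{Surjectivity} of $D_yF(U_0,y_0)$, choose a subspace $L_0\subset E$ of dimension $\dim H$ on which this differential restricts to an isomorphism onto~$H$. Decomposing $E=L_0\oplus L_0^\bot$ and writing $y=y^++y^-$, I introduce the augmented map
$$
\Phi(U,y):=\bigl(F(U,y),y^-\bigr)\in H\times L_0^\bot,
$$
whose $y$-differential is invertible at~$(U_0,y_0)$. By the implicit function theorem with parameters, there exist $\delta>0$ and a neighbourhood $\VV\subset\XXXX$ of~$U_0$ such that, for every $U\in\VV$, the restriction $\Phi(U,\cdot)|_{B(y_0,\delta)}$ is a $C^1$-diffeomorphism onto its image, with inverse $\Psi(U,\cdot)$ whose Jacobian is uniformly bounded away from zero. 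Using compactness of~$\KK$ together with the joint Lipschitz continuity and uniform bounds on~$D_yF$, I cover~$\KK$ by finitely many such charts $\{(B(y_i,\delta_i),L_i,\Phi_i,\Psi_i)\}_{i=1}^N$ that work simultaneously for every $U\in\XXXX$, and fix a smooth partition of unity $\{\chi_i\}$ subordinate to this cover.

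For each bounded measurable $f:H\to\R$ and $U\in\XXXX$, I decompose $\rho=\sum_i\chi_i\rho$ and, in each summand of
$$
\int_E f\bigl(F(U,y)\bigr)\rho(U,y)\,\ell(\dd y)=\sum_{i=1}^N\int_E f\bigl(F(U,y)\bigr)\chi_i(y)\rho(U,y)\,\ell(\dd y),
$$
perform the change of variable $z=\Phi_i(U,y)$ with $z=(x,y^-)\in H\times L_i^\bot$. This yields the representation
$$
g(U,x)=\sum_{i=1}^N\int_{L_i^\bot}\chi_i\bigl(\Psi_i(U,x,y^-)\bigr)\rho\bigl(U,\Psi_i(U,x,y^-)\bigr)\bigl|\det D_z\Psi_i(U,x,y^-)\bigr|\,\ell(\dd y^-).
$$
Since $\rho$ is Lipschitz in both arguments, each $\Psi_i$ together with its Jacobian depends Lipschitz-continuously on $(U,x,y^-)$, and the $y^-$-support of each integrand is uniformly bounded, one concludes that~$g$ is Lipschitz in both $U$ and~$x$.

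The main obstacle I expect is controlling the $U$-dependence of the local inverses~$\Psi_i$. The implicit function theorem with parameters yields Lipschitz dependence on~$U$ only if $\Phi_i(U,y)$, and hence $F(U,y)$, is Lipschitz in $(U,y)$, which goes slightly beyond the mere continuity of~$F$ in~$U$ appearing in the hypotheses. This is, however, covered by the joint Lipschitz continuity of $D_yF$ (integrating along~$y$ transfers Lipschitz regularity in~$U$ from~$D_yF$ to~$F$ on bounded sets), and it is automatic in the application of Section~\ref{s-dssn}, where $F$ is an iterate of the $C^2$-smooth map~$S$. Once Lipschitz regularity of~$\Psi_i$ and~$D_z\Psi_i$ is established, verifying that $|\det D_z\Psi_i|$ is Lipschitz via the chain rule and the uniform lower bound on the Jacobian of~$\Phi_i$ is a routine calculation.
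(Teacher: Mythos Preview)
Your approach is essentially the paper's: localise via surjectivity and the implicit function theorem, change variables $x=F(U,y^++y^-)$ with $y^-$ as a transverse parameter, and read off the density as an integral over~$y^-$. The only organisational difference is that you make the partition-of-unity gluing explicit, whereas the paper reduces to a single local chart and declares the globalisation obvious.

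You are right to single out the $U$-dependence of the local inverses as the delicate point---the paper simply asserts that the inverse $G$ is Lipschitz in $(U,x,y_2)$---but your proposed cure does not close the gap. Integrating the Lipschitz bound on $D_yF$ along~$y$ only shows that $F(U,y)-F(U,y_0)$ is Lipschitz in~$U$; it says nothing about $F(U,y_0)$ itself. Consider $\XXXX=[0,1]$, $E=H=\R$, $\KK=[-1,1]$, $F(U,y)=y+\sqrt{U}$, and $\rho(U,y)=\rho(y)$ a fixed non-constant Lipschitz density supported in~$\KK$: all the stated hypotheses hold (in particular $D_yF\equiv1$ is trivially Lipschitz), yet the image density $g(U,x)=\rho(x-\sqrt{U})$ is not Lipschitz in~$U$ near $U=0$. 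So an additional assumption---Lipschitz continuity of~$F$ in~$U$---is genuinely required for the conclusion; as you correctly observe, this is automatic in the paper's application, where $F$ is built from iterates of the $C^2$ map~$S$.
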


\begin{proof}
	Justification of~\eqref{density-image} is based on well-known ideas (cf.\ \cite[Chapter~9]{bogachev2010} or~\cite{AKSS-aihp2007}), and therefore we shall skip some of the details. In what follows, when talking about a family of measures $\lambda(U,\cdot)$, we shall always assume that it satisfies the \hyperlink{(LD)}{Lipschitz density} hypothesis. 

Obviously, it suffices to prove that, for any $\widehat U\in\XXXX$ and $\hat y\in\KK$, there is a number $r>0$ and Lipschitz continuous function
$$
g:{\overline B}_\XXXX(\widehat U,r)\times H\to\R_+
$$
such that~\eqref{density-image} holds for $U\in {\overline B}_\XXXX(\widehat U,r)$ and any measure $\lambda(U,\cdot)$ with support included in~$\KK\cap {\overline B}_E(\hat y,r)$. Let us fix any $\widehat U\in\XXXX$, $\hat y\in\KK$ and set $\hat x=F(\widehat U,\hat y)$. By assumption, the linear operator $(D_y F)(\widehat U,\hat y):E\to H$ is surjective. Therefore, we can write~$E$ as the direct sum $E_1\dotplus E_2$, with the corresponding decompositions $y=y_1+y_2$ and $\hat y=\hat y_1+\hat y_2$ , so that the derivative $(D_{y_1}F)(\widehat U,\hat y):E_1\to H$ is an isomorphism. Using a parameter version of the implicit function theorem, we can find positive numbers~$r$ and~$\delta$ such that, for any $\widehat U\in {\overline B}_\XXXX(\widehat U,r)$ and $y_2\in {\overline B}_{E_2}(\hat y_2,2r)$, the map $y_1\mapsto F(U,y_1+y_2)$ is a diffeomorphism of the ball ${\overline B}_{E_1}(\hat y_1,2r)$ onto its image included in ${\overline B}_H(\hat x,\delta)$. We denote by~$y_1=G(U,x,y_2)$ its inverse, which is a Lipschitz continuous function of~$(U,x,y_2)$ and is~$C^1$ smooth with uniformly bounded derivatives as a function of~$(x,y_2)$.

We now derive an explicit expression for the measure $F_*(U,\lambda(U,\cdot))$, assuming that $\supp\lambda(U,\cdot)\subset {\overline B}_E(\hat y,r)$. To this end, we introduce a smooth truncating function $\chi(y)$ equal to~$1$ on the product $W_r:=B_{E_1}(\hat y_1,r)\times B_{E_2}(\hat y_2,r)$ and vanishing outside~$W_{2r}$. In view of~\eqref{density-lambda}, for any $f\in C_b(H)$ and $U\in {\overline B}_\XXXX(\widehat U,r)$, we can write 
\begin{multline*}
	\bigl\langle f, F_*\bigl(U,\lambda(U,\cdot)\bigr)\bigr\rangle =\int_E f\bigl(F(U,y)\bigr)\lambda(U,\dd y)\\
	=\int_{E_1}\int_{E_2} \chi(y_1+y_2)f\bigl(F(U,y_1+y_2)\bigr)\rho(U,y_1+y_2)\ell(\dd y_1)\ell(\dd y_2).
\end{multline*}
Abbreviating $Z=(U,x,y_2)$, carrying out the change of variable $y_1=G(Z)$ (so that $x=F(U,y_1+y_2)$), and denoting by $J(Z)$ its Jacobian with respect to~$x$, we write the right-most term as
$$
\int_H f(x)\biggl\{\int_{E_2} \chi(G(Z)+y_2)\rho(U,G(Z)+y_2)J(Z)\ell(\dd y_2)\biggr\}\ell(\dd x).
$$
We thus obtain relation~\eqref{density-image}, in which~$g(U,x)$ is given by the expression in the curly brackets. The regularity of the functions entering that expression implies that~$d$ is Lipschitz-continuous with respect to its arguments. This completes the proof of the theorem. 
\end{proof}

Let us define a map $\varPsi: \PP(\XXXX)\to \PP(H)$  by the relation
\begin{equation*}
\varPsi(\nu)=\int_\XXXX F_*\bigl(U,\lambda(U,\cdot)\bigr)\nu(\dd U), 
\end{equation*}
where $\nu\in\PP(\XXXX)$ is arbitrary. The following simple result provides sufficient conditions for the Lipschitz continuity of~$\varPsi$. 

\begin{corollary}\label{c-lip}
	Under the hypotheses of Theorem~\ref{t-image-measure}, there is a number $C>0$  such that 
	\begin{equation}\label{lipschitz-measures}
	\bigl\|\varPsi(\nu_1)-\varPsi(\nu_2)\bigr\|_{\mathrm{var}}\le C\,\bigl\|\nu_{1}-\nu_{2}\bigr\|_L^*
	\quad\mbox{for any $\nu_1,\nu_2\in\PP(\XXXX)$}.  
	\end{equation}
\end{corollary}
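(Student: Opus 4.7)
The plan is to reduce the total variation comparison of $\varPsi(\nu_1)$ and $\varPsi(\nu_2)$ on $H$ to a dual-Lipschitz pairing of $\nu_1-\nu_2$ on $\XXXX$, by testing against indicators of Borel sets and recognizing the resulting integrand on $\XXXX$ as a bounded Lipschitz function whose dual-Lipschitz norm is controlled uniformly in the chosen set. The key input is Theorem~\ref{t-image-measure}, which makes $F_*(U,\lambda(U,\cdot))$ absolutely continuous with a Lipschitz density $g(U,x)$ in both variables; this is what transfers regularity in $U$ across the integration in $x$.

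First, I would apply Theorem~\ref{t-image-measure} and use Fubini to obtain, for every Borel $\Gamma\subset H$,
$$
\varPsi(\nu)(\Gamma)=\int_\XXXX \Phi_\Gamma(U)\,\nu(\dd U),\qquad \Phi_\Gamma(U):=\int_\Gamma g(U,x)\,\ell(\dd x),
$$
and therefore
$$
\varPsi(\nu_1)(\Gamma)-\varPsi(\nu_2)(\Gamma)=\int_\XXXX \Phi_\Gamma\,\dd(\nu_1-\nu_2).
$$
Taking the supremum over $\Gamma$ on the left yields $\|\varPsi(\nu_1)-\varPsi(\nu_2)\|_{\mathrm{var}}$, so it suffices to bound the right-hand side uniformly in $\Gamma$ by $C\,\|\nu_1-\nu_2\|_L^*$.

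Second, I would verify that the family $\{\Phi_\Gamma\}_\Gamma$ has a dual-Lipschitz norm on $\XXXX$ bounded independently of $\Gamma$. The sup-norm bound $0\le\Phi_\Gamma(U)\le 1$ is immediate because $\Phi_\Gamma(U)$ is the mass assigned to $\Gamma$ by the probability measure $F_*(U,\lambda(U,\cdot))$. For the Lipschitz part, writing $L$ for the Lipschitz constant of $g$ in its first argument, one has
$$
|\Phi_\Gamma(U_1)-\Phi_\Gamma(U_2)|\le \int_K |g(U_1,x)-g(U_2,x)|\,\ell(\dd x)\le L\,\ell(K)\,\dd_\XXXX(U_1,U_2),
$$
where $K\subset H$ is any compact set containing the supports $\supp g(U,\cdot)\subset F(U,\KK)$. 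Setting $C_0:=1+L\,\ell(K)$, the function $\Phi_\Gamma/C_0$ is then an admissible test function in the definition of $\|\cdot\|_L^*$. Combining with the previous step yields $|\varPsi(\nu_1)(\Gamma)-\varPsi(\nu_2)(\Gamma)|\le C_0\|\nu_1-\nu_2\|_L^*$, and the supremum over $\Gamma$ gives \eqref{lipschitz-measures} with $C=C_0$.

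The only non-routine point, and therefore where I would focus attention, is producing a single compact $K\subset H$ that contains $\supp g(U,\cdot)$ uniformly in $U$, so that the Lipschitz constant of $\Phi_\Gamma$ does not blow up with $\Gamma$. This follows from the continuity of $F$ together with compactness of $\KK$, provided $U$ ranges over a compact subset of $\XXXX$; in the settings in which this corollary is applied in the paper (in particular in the proof of Theorem~\ref{t-mixing-stationary}, where $\XXXX=X\times\EE$ with both factors compact), this uniformity is automatic and the estimate closes.
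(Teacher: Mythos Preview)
Your argument is correct and is in fact more direct than the paper's. The paper writes the total variation distance as a supremum over $|f|\le1$, introduces an arbitrary coupling $\MMM$ of $(\nu_1,\nu_2)$, bounds
\[
\int_H f(x)\bigl(g(U_1,x)-g(U_2,x)\bigr)\,\ell(\dd x)
\]
by $C\,\dd_\XXXX(U_1,U_2)$ via the Lipschitz property of $g$ in $U$, and then takes the infimum over couplings, invoking the Kantorovich--Rubinstein theorem together with the equivalence of the Kantorovich and dual-Lipschitz distances on $\PP(\XXXX)$. You bypass this detour entirely: by recognising that $\Phi_\Gamma$ is bounded by~$1$ and Lipschitz with a constant independent of~$\Gamma$, you feed $\Phi_\Gamma/C_0$ straight into the definition of $\|\cdot\|_L^*$. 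This is a more elementary argument and avoids the external reference to Kantorovich--Rubinstein. The compactness issue you flag (a uniform compact $K\supset\supp g(U,\cdot)$ needed to make $\ell(K)<\infty$) is equally present in the paper's proof --- its bound $\int_H|g(U_1,x)-g(U_2,x)|\,\ell(\dd x)\le C\,\dd_\XXXX(U_1,U_2)$ requires exactly the same uniform control on the support --- but the paper does not comment on it; you are right that in the applications $\XXXX$ is compact and the issue disappears.
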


\begin{proof}
	In view of~\eqref{density-image}, for any $f\in C_b(H)$ and $\nu\in \PP(\XXXX)$ we have 
$$
\langle f,\varPsi(\nu)\rangle=\int_\XXXX \biggl\{\int_H f(x)g(U,x)\ell(\dd x)\biggr\}\nu(\dd U).
$$
Let $\MMM\in\PP(\XXXX\times\XXXX)$ be an arbitrary measure whose marginals are equal to~$\nu_1$ and~$\nu_2$. Recalling a well-known formula for the total variation distance, we obtain
\begin{multline*}
\bigl\|\varPsi(\nu_1)-\varPsi(\nu_2)\bigr\|_{\mathrm{var}}=\frac12
\sup_f\,\bigl|\langle f,\varPsi(\nu_1)\rangle-\langle f,\varPsi(\nu_2)\rangle\bigr|\\
=\frac12\sup_f\,\biggl|\,\int_{\XXXX\times\XXXX} \biggl\{\int_H f(x)\bigl(g(U_1,x)-g(U_2,x)\bigr)\ell(\dd x)\biggr\}\MMM(\dd U_1,\dd U_2)\biggr|,
\end{multline*}
where the supremum is taken over all continuous functions $f:H\to\R$ whose absolute value is bounded by~$1$. Using the Lipschitz property of~$g(U,x)$ with respect to~$U$, we derive
	$$
\bigl\|\varPsi(\nu_1)-\varPsi(\nu_2)\bigr\|_{\mathrm{var}}
\le C\int_{\XXXX\times\XXXX}\dd_\XXXX(U_1,U_2)\,\MMM(\dd U_1,\dd U_2). 
	$$
Taking the infimum over all $\MMM\in\PP(\XXXX\times\XXXX)$ with marginals~$(\nu_1,\nu_2)$ and using the Kantorovich--Rubinstein theorem (see~\cite[Theorem~11.8.2]{dudley2002}), together with the fact that the Kantorovich and dual-Lipschitz distances are equivalent on~$\PP(\XXXX)$, we arrive at the required inequality~\eqref{lipschitz-measures}. 
\end{proof}

\addcontentsline{toc}{section}{Bibliography}
\def\cprime{$'$} \def\cprime{$'$}
  \def\polhk#1{\setbox0=\hbox{#1}{\ooalign{\hidewidth
  \lower1.5ex\hbox{`}\hidewidth\crcr\unhbox0}}}
  \def\polhk#1{\setbox0=\hbox{#1}{\ooalign{\hidewidth
  \lower1.5ex\hbox{`}\hidewidth\crcr\unhbox0}}}
  \def\polhk#1{\setbox0=\hbox{#1}{\ooalign{\hidewidth
  \lower1.5ex\hbox{`}\hidewidth\crcr\unhbox0}}} \def\cprime{$'$}
  \def\polhk#1{\setbox0=\hbox{#1}{\ooalign{\hidewidth
  \lower1.5ex\hbox{`}\hidewidth\crcr\unhbox0}}} \def\cprime{$'$}
  \def\cprime{$'$} \def\cprime{$'$} \def\cprime{$'$}
\providecommand{\bysame}{\leavevmode\hbox to3em{\hrulefill}\thinspace}
\providecommand{\MR}{\relax\ifhmode\unskip\space\fi MR }
\providecommand{\MRhref}[2]{%
  \href{http://www.ams.org/mathscinet-getitem?mr=#1}{#2}
}
\providecommand{\href}[2]{#2}

\end{document}